\newtheorem{thm}{Theorem}
\newtheorem*{que*}{Question}
\newtheorem{prop}[thm]{Proposition}
\newtheorem{lem}[thm]{Lemma}
\newtheorem{remark}[thm]{Remark}
\newtheorem*{nonumberthm}{Theorem}
\begin{document}

\title{Trace field degrees in the Torelli group}

\author{Erwan Lanneau}
\address{
UMR CNRS 5582, 
Univ. Grenoble Alpes, CNRS, Institut Fourier, F-38000 Grenoble, France}
\email{erwan.lanneau@univ-grenoble-alpes.fr}

\author{Livio Liechti}
\address{Department of Mathematics\\
University of Fribourg\\
Chemin du Mus\'ee 23\\
1700 Fribourg\\Switzerland}
\email{livio.liechti@unifr.fr}

\begin{abstract}
We show that for~$g\ge 2$, all integers~$1\le d\le 3g-3$ arise 
as trace field degrees of pseudo-Anosov mapping classes in the Torelli group
of the closed orientable surface of genus~$g$. 
Our method uses the Thurston--Veech construction of pseudo-Anosov maps, 
and we provide examples where the stretch factor has algebraic degree any even 
number between two and~$6g-6$. 
This validates a claim by Thurston from the 1980s. 
\end{abstract}

\maketitle

\section{Introduction}

\noindent
Let~$S_g$ be the closed orientable surface of genus~$g\ge 2$.
A homeomorphism~$f$ of~$S_g$ is \emph{pseudo-Anosov} if there exists a pair of transverse 
singular measured foliations~$\mathcal F^u$ and~$\mathcal F^s$ and a real 
number~$\lambda >1$ such that~$f(\mathcal F^u)=\lambda \mathcal F^u$ 
and~$f(\mathcal F^s)=\lambda^{-1} \mathcal F^s$.
The number~$\lambda > 1$ is the \emph{stretch factor} of the 
pseudo-Anosov map, and is an algebraic integer. 
The degree of the field extension~$\mathbb{Q}(\lambda) : \mathbb{Q}$ is the \emph{stretch 
factor degree}, and is bounded from above by the dimension of the Teichm\"uller space 
of~$S_g$, namely~$6g-6$~\cite{Th}. 
\smallskip

\noindent
There is another field which plays a central role in this article:~$\mathbb{Q}(\lambda + \lambda^{-1})$. 
This field is the \emph{trace field} and is uniquely determined by the 
pair $(\mathcal F^u,\mathcal F^s)$~\cite{Kenyon:Smillie, Gutkin:Judge}.
The degree of the field extension~$\mathbb{Q}(\lambda + \lambda^{-1}) : \mathbb{Q}$ is 
the~\emph{trace field degree}, and is bounded from above by~$3g-3$. 
\smallskip

\noindent
Strenner showed that for~$S_g$, all integers~$1\le d \le 3g-3$ 
arise as trace field degrees of pseudo-Anosov maps~\cite{Strenner:algebraic}.
Furthermore, Strenner determined the set of integers which arise as stretch factor 
degrees: all even integers between~$2$ and~$\le 6g-6$, as well as all odd  
integers between~$3$ and~$3g-3$~\cite{Strenner:algebraic},
{but none of Strenner's examples are elements of the Torelli group (see~\Cref{sec:proof:strategy}).}

\subsection{Torelli groups} 
The \emph{Torelli group}~$\mathcal{I}(S_g)$ is the kernel of the symplectic representation 
of the mapping class group~$\mathrm{Mod}(S_g)$. In~\cite[Problem 10.6]{Margalit:question}, 
Margalit asked which stretch factor degrees arise for pseudo-Anosov mapping classes in 
the Torelli group. 
Our first main result completely answers the question of trace field degrees arising in Torelli groups. 

\begin{thm}
\label{thm:trace}
Let~$g\ge 2$. Every integer~$1\le d\le 3g-3$ arises as the trace field degrees of a pseudo-Anosov 
mapping class in the Torelli group~$\mathcal{I}(S_g)$.
\end{thm}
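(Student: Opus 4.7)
The plan is to apply the Thurston-Veech construction to pairs of filling multicurves consisting entirely of separating simple closed curves on $S_g$. Since every Dehn twist along a separating curve lies in the Torelli group $\mathcal{I}(S_g)$, so does any product of such twists. Thus, whenever $A=\{a_1,\ldots,a_p\}$ and $B=\{b_1,\ldots,b_q\}$ are multicurves of separating curves that jointly fill $S_g$ and $n_i,m_j\in\mathbb{Z}_{>0}$, the map
\[
f \;=\; T_{a_1}^{n_1}\cdots T_{a_p}^{n_p}\, T_{b_1}^{-m_1}\cdots T_{b_q}^{-m_q}
\]
is pseudo-Anosov by Thurston-Veech and automatically belongs to $\mathcal{I}(S_g)$. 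Its stretch factor $\lambda$ is the Perron eigenvalue of a non-negative integer matrix $M$ built from the intersection numbers $i(a_i,b_j)$ and the multiplicities $n_i,m_j$, and by the Kenyon-Smillie/Gutkin-Judge result the trace field $\mathbb{Q}(\lambda+\lambda^{-1})$ is read off from the minimal polynomial of the largest root of an associated symmetric polynomial in $\lambda,\lambda^{-1}$.

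For each target degree $d$ with $1\le d\le 3g-3$, I would produce an explicit filling configuration of separating curves on $S_g$, parametrised by one free integer $k$, whose Thurston-Veech data yields trace field of degree exactly $d$. A natural template is a chain of nested separating curves bounding subsurfaces of increasing genus, crossed transversely by a second multicurve of separating curves; by adjusting how many curves appear in each multicurve and the genera of the bounded subsurfaces, one can tune the size of $M$ anywhere between $1$ and $3g-3$, and by varying $k$ one obtains a one-parameter family of characteristic polynomials.

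The main difficulty is twofold. First, constructing filling pairs of multicurves of separating curves with the desired incidence combinatorics is more constrained than in Strenner's general setting, since any two separating curves intersect in an even number of points and this rules out several simple bipartite intersection matrices; one has to cook up specific configurations adapted to every $g\ge 2$. Second, and arguably harder, one must force the trace field degree to equal $d$ exactly, not a proper divisor. I would address this by letting the multiplicities $n_i,m_j$ depend linearly on $k$ so that the minimal polynomial of $\lambda+\lambda^{-1}$ becomes the specialisation at $x=k$ of a one-variable integer polynomial, and then establishing irreducibility for infinitely many $k$ either by reduction modulo a well-chosen prime or by a Hilbert irreducibility argument. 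Perron-Frobenius theory identifies $\lambda$ unambiguously as the largest root and controls its Galois conjugates, which is crucial both for preserving the pseudo-Anosov property under these specialisations and for carrying out the degree computation.
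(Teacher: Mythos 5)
Your reduction to the Torelli group has a genuine obstruction that the paper itself has to work around: a multicurve on~$S_g$ all of whose components are separating (and pairwise non-isotopic) has at most~$2g-3$ components. Indeed, cutting along~$k$ such curves produces a tree-like decomposition into~$k+1$ pieces whose genera sum to~$g$, and counting boundary circles forces~$k\le 2g-3$. Since the trace field of a Thurston--Veech map built from multicurves~$\alpha,\beta$ is generated by the leading eigenvalue of the~$p\times p$ matrix~$XX^\top$ (with~$p$ the number of isotopy classes of components of~$\alpha$; parallel copies do not help), its degree is at most~$2g-3$ when both multicurves consist only of separating curves. So your template can never reach the degrees~$2g-2\le d\le 3g-3$, and in genus two it reaches only~$d=1$, because a multicurve on~$S_2$ has at most one separating component. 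The claim that the size of~$M$ can be tuned ``anywhere between~$1$ and~$3g-3$'' is exactly where the argument breaks.

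The paper avoids this cap by allowing nonseparating components, arranged so that each nonseparating component of~$\alpha$ is matched with a parallel-homologous partner in~$\beta$ forming a bounding pair; then~$T_\alpha\circ T_\beta^{-1}$ (and hence~$T_\alpha^n\circ T_\beta^{-n}$) lies in~$\mathcal{I}(S_g)$ even though the individual twists do not, and the nonsplitting criterion (\Cref{thm:criterion:nonsplitting} with~$\varepsilon=-1$) is then applied. Even this only works for~$g\ge 3$ (there are no bounding pairs on~$S_2$); the genus-two cases~$d=2,3$ are handled by entirely different ad hoc examples, namely products of conjugates of a separating twist on~$S_{2,1}$ checked with flipper and then capped off. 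Your remaining ingredients (Hilbert irreducibility or reduction mod~$p$ to force the exact degree, Perron--Frobenius control of~$\lambda$) are in the spirit of the paper's argument for the degrees your configurations can reach, but as written the proposal cannot prove the full range~$1\le d\le 3g-3$, nor the case~$g=2$ beyond~$d=1$.
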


\noindent
The stretch factor~$\lambda$ satisfies the quadratic equation~$t^2-(\lambda+\lambda^{-1})t + 1$ 
over the trace field~$\mathbb{Q}(\lambda+ \lambda^{-1})$. Hence, the field 
extension~$\mathbb{Q}(\lambda) : \mathbb{Q}(\lambda+ \lambda^{-1})$ has degree one or two. 
We obtain our second main result by showing that for all trace field degrees there exist instances 
where this field extension degree equals two. 

\begin{thm}
\label{thm:stretch}
Let~$g\ge 2$. Every even integer $2d$ {with} $2 \le 2d \le 6g-6$ arises as the stretch factor degree of a pseudo-Anosov 
mapping class in the Torelli group~$\mathcal{I}(S_g)$.
\end{thm}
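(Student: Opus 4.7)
The statement reduces, as spelled out just before it, to separating the two possibilities $[\mathbb{Q}(\lambda):K]\in\{1,2\}$, where $K=\mathbb{Q}(\mu)$ is the trace field with $\mu=\lambda+\lambda^{-1}$. Since Theorem~\ref{thm:trace} already supplies, for every $1\le d\le 3g-3$, a pseudo-Anosov in $\mathcal{I}(S_g)$ with trace field of degree~$d$, it is enough to show that among such examples one can be chosen for which the quadratic $t^2-\mu t+1$ is irreducible over~$K$. Equivalently, one has to arrange that the discriminant $\mu^2-4=(\lambda-\lambda^{-1})^2$ is not a square in $\mathbb{Q}(\mu)$.

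The plan is to read this non-square condition directly off the Thurston--Veech data underlying the proof of Theorem~\ref{thm:trace}. There, $\mu$ is (half of) the Perron--Frobenius eigenvalue of an explicit non-negative integer matrix encoded by a pair of filling multicurves together with twist multiplicities, so its minimal polynomial over~$\mathbb{Q}$ is the characteristic polynomial of a concrete matrix and is accessible in closed form. To certify that $\mu^2-4$ is not a square in $\mathbb{Q}(\mu)$ I would exhibit a finite prime of $\mathbb{Z}[\mu]$ at which it has odd valuation, a check that reduces to a polynomial factorization modulo a small rational prime and can be automated for each trace field degree~$d$. When the construction belongs to an infinite parametric family, only finitely many values of the parameter can make the discriminant a square, so a generic choice within the family both preserves the trace field degree~$d$ from Theorem~\ref{thm:trace} and delivers the stretch factor degree~$2d$.

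The main obstacle will be handling the small and extremal values of $d$, where the combinatorial room in the Thurston--Veech input is tight and the generic-parameter argument is unavailable. For these cases I expect to exhibit explicit witnesses by hand, and then to propagate the degree-$2d$ property from lower to higher genus by a Torelli-preserving stabilization: glue on a handle and compose with a Dehn twist about a separating curve supported in the new handle, arranged to commute with the original pseudo-Anosov. Checking that this stabilization remains pseudo-Anosov, stays in the Torelli group, and preserves both the trace field and the non-squareness of $\mu^2-4$ is where the greatest care will be required, as these are precisely the invariants the statement asks us to control.
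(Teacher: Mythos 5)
Your reduction is the right one---everything hinges on showing that the discriminant $\mu^2-4$ of $t^2-\mu t+1$ is not a square in the trace field $K$---but the step where you dispose of this is exactly where the real work lies, and you have not supplied it. The assertion that ``only finitely many values of the parameter can make the discriminant a square'' is unproved, and it is not a soft fact: in the paper this is the content of \Cref{thm:criterion:nonsplitting}, whose proof fixes the multicurves, varies the twisting exponent $n$ in $T_\alpha^n\circ T_\beta^{n\varepsilon}$, and rules out the discriminant being a square in $K$ for \emph{all} $n$ by computing the norm $N((n\mu')^2-\mu')=Q(n^2)$, reducing modulo primes, invoking Murty's theorem on polynomials taking only square values, and checking separability of $Q(t^2)$. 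Your ``odd valuation at some prime of $\mathbb{Z}[\mu]$'' test is a way to certify a single explicit example, not an argument that such an example exists for every $1\le d\le 3g-3$ and every $g$, which is what the theorem requires. Note also that varying parameters in the multicurve construction changes the field $K$ itself (one must then re-establish the trace field degree $d$, which the paper does via Eisenstein plus Hilbert irreducibility), whereas varying the twist exponent keeps $K$ fixed; your proposal does not distinguish these two moves, and neither is handled.

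The stabilization you propose for small and extremal $d$ does not work as stated: if you glue a handle onto $S_g$ and compose the old pseudo-Anosov with a Dehn twist about a separating curve supported in the new handle, chosen to commute with it, the resulting mapping class preserves the separating curve along which you glued and is therefore reducible, not pseudo-Anosov. Making the map pseudo-Anosov on the larger surface forces the new data to interact with the old (the invariant foliations must fill), and then neither the trace field nor the non-squareness of the discriminant is preserved for free; this is why the paper instead re-runs an inductive multicurve construction at every genus, with the Torelli condition enforced by using only separating components and bounding pairs split between $\alpha$ and $\beta$, and why $g=2$ needs a separate ad hoc treatment (on $S_2$ there are no bounding pairs and a multicurve has at most one separating component, so the Torelli-compatible multicurve construction caps out at degree one, and the paper resorts to explicit computer-verified examples with stretch factor degrees $2,4,6$). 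As it stands, your proposal has the correct first sentence but leaves both the existence mechanism and the genus-two/propagation steps without a valid argument.
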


\subsection{The Thurston--Veech construction}

We prove our main results using the Thurston--Veech construction. This construction of 
pseudo-Anosov maps appeared independently in two papers by Thurston and 
Veech~\cite{Th,Veech:construction}. 
\smallskip

\noindent
A \emph{multicurve} is a disjoint union of simple closed curves, 
and a pair of multicurves~$\alpha,\beta\subset S_g$ \emph{fills} the surface~$S_g$ 
if~$\alpha$ and~$\beta$ intersect transversally and if the 
complement~$S_g \setminus \left( \alpha \cup \beta \right)$ is a union 
of topological discs none of which is a bigon. This in particular implies 
that each pair~$\alpha_i$ and~$\beta_j$ of components realises the minimal 
number of intersection points within their respective isotopy classes.  
\smallskip

\noindent
Given a pair of filling 
multicurves~$\alpha, \beta \subset S_g$, the Thurston--Veech 
construction provides pseudo-Anosov mapping classes in the subgroup~$\langle T_\alpha, T_\beta \rangle$ 
of~$\mathrm{Mod}(S_g)$ generated by multitwists along the multicurves~$\alpha$ and~$\beta$. 
In his seminal 1988 Bulletin paper~\cite{Th}, Thurston provides the upper bound 
of~$6g-6$ on the algebraic degree of a pseudo-Anosov stretch factor~$\lambda(f)$ and 
claims, without proof, that {``\em the examples of~\cite[Theorem~7]{Th} show that this 
bound is sharp"}. The referenced examples are exactly the pseudo-Anosov maps 
in~$\langle T_\alpha, T_\beta\rangle$. 
\smallskip

\noindent
Margalit remarked in 2011 what Strenner wrote down in his article on stretch factor 
degrees~\cite{Strenner:algebraic}, namely that no proof of Thurston's claim has ever been 
published. We are finally able to substantiate Thurston's claim. Furthermore, we can even do so 
for pseudo-Anosov maps in the Torelli group. Our precise statement for the 
Thurston--Veech construction is the following.
 
\begin{thm}
\label{thm:main:1}
Let~$g\ge2$ and~$1\le d\le 3g-3$ be integers. Then there exists a pseudo-Anosov map on~$S_g$
arising from the Thurston--Veech construction with trace field degree~$d$ 
and stretch factor degree~$2d$. For~$g\ge 3$, the pseudo-Anosov maps can be 
chosen in the Torelli group~$\mathcal{I}(S_g)$. 
\end{thm}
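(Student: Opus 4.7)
The plan is to realise each target trace field degree $d$ by an explicit weighted bipartite intersection graph, coming from a filling multicurve pair whose Thurston--Veech pseudo-Anosov has stretch factor lying in a genuine quadratic extension of the trace field. I would break the construction into an algebraic step, a topological step, and a separating-curve refinement for the Torelli conclusion.

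On the algebraic side, recall that for filling multicurves $\alpha = \alpha_1 \sqcup \cdots \sqcup \alpha_m$ and $\beta = \beta_1 \sqcup \cdots \sqcup \beta_n$ with intersection matrix $N = \bigl(i(\alpha_i,\beta_j)\bigr)$, and positive integer twist multiplicities collected into $H = \operatorname{diag}(h_i)$ and $V = \operatorname{diag}(v_j)$, the Thurston--Veech construction produces a pseudo-Anosov $f = \prod_i T_{\alpha_i}^{h_i} \prod_j T_{\beta_j}^{-v_j}$ whose stretch factor $\lambda$ satisfies $\lambda + \lambda^{-1} = 2 + \mu$, where $\mu$ is the Perron--Frobenius eigenvalue of the nonnegative integer matrix $HNVN^T$. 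Thus the trace field equals $\mathbb{Q}(\mu)$, and the stretch factor attains degree $2d$ over $\mathbb{Q}$ precisely when the discriminant $\mu(\mu+4)$ fails to be a square in $\mathbb{Q}(\mu)$.

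For each $1\le d \le 3g-3$, I would exhibit a weighted bipartite graph whose matrix $HNVN^T$ has Perron polynomial irreducible of degree $d$ satisfying this non-square condition, using a family of weighted ``caterpillar'' trees: tuning the weights allows an Eisenstein-type argument at a small prime to force irreducibility, while the non-square condition is generic and can be verified by reduction modulo a prime. Topologically, I would then embed this weighted bipartite graph as the intersection graph of a filling pair of multicurves on $S_g$. Starting from a small surface naturally carrying the graph and stabilising by attaching handles with trivial $1\times 1$ intersection blocks (a pair of curves intersecting once on an auxiliary handle, which preserves both irreducibility and the degree of $\mu$), one reaches any $g \ge g_0$ where $g_0$ is the minimal compatible genus; Euler's formula pins down $g$ in terms of the total intersection and the number of complementary polygons.

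For $g \ge 3$, I would finally arrange every component $\alpha_i$ and $\beta_j$ to be separating, so that each participating Dehn twist---and therefore $f$---lies in the Torelli group $\mathcal{I}(S_g)$. This is the main obstacle: separating curves on a closed surface come with parity constraints on intersection numbers and each bounds subsurfaces of positive genus, which is exactly why the Torelli conclusion excludes $g = 2$. I would address it by first exhibiting a small family of filling pairs of separating multicurves on some low genus realising the combinatorial model from the algebraic step, and then stabilising by handles carrying additional separating curves with the appropriate trivial intersection blocks, so that the irreducible Perron polynomial of degree $d$ is inherited at every higher genus while homological triviality is preserved.
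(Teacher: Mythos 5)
Your Torelli step has a genuine gap: you propose to make \emph{every} component of~$\alpha$ and~$\beta$ separating, but this caps the reachable trace field degree strictly below~$3g-3$. The components of~$\alpha$ are disjoint, and a multicurve on~$S_g$ carries at most~$2g-3$ pairwise non-isotopic separating components (parallel copies only duplicate rows of~$X$ and cannot raise the degree), so the Perron--Frobenius eigenvalue of~$XX^\top$ would have degree at most~$2g-3<3g-3$. In particular the top degree~$d=3g-3$ --- the heart of Thurston's claim --- is unreachable by all-separating systems, no matter how the weights are tuned. The paper's essential device is different: it allows non-separating components provided they occur in \emph{bounding pairs}, with one curve of each pair placed in~$\alpha$ and the other in~$\beta$; then~$T_\alpha^n\circ T_\beta^{-n}$ is a product of bounding-pair maps and twists about separating curves, hence lies in~$\mathcal{I}(S_g)$, while the intersection matrix can be made large enough to realise every~$1\le d\le 3g-3$. (This is also the actual reason~$g=2$ is excluded: $S_2$ has no bounding pairs and a multicurve there has at most one separating component.)

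A second gap is the passage from trace field degree~$d$ to stretch factor degree~$2d$. You assert that the non-square condition on the discriminant~$\mu(\mu+4)$ is ``generic'' and checkable by reduction modulo a prime, but you need it simultaneously with irreducibility for an infinite family of examples (all~$g$ and all~$d$), and no argument is offered; a fixed product of twists can very well have~$\lambda\in\mathbb{Q}(\mu)$. The paper devotes a theorem to exactly this point (its nonsplitting criterion): for any filling pair of intersection degree~$d$ and either sign~$\varepsilon$, some power~$T_\alpha^n\circ T_\beta^{n\varepsilon}$ has stretch factor of degree exactly~$2d$, proved via norms of algebraic units and Murty's theorem on polynomials taking square values --- and this detour through powers is needed precisely because the explicit criterion of~\cite{LL} fails for the Torelli multicurves. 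Finally, a smaller but real problem: stabilising by a handle carrying a pair of curves that intersect each other once but are disjoint from the old system destroys the filling condition (the complement acquires an annulus component), so the Thurston--Veech construction no longer applies; the paper's inductive step instead adds curves that intersect the existing system and rechecks irreducibility via an Eisenstein-plus-Hilbert argument.
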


\noindent
Clearly,~\Cref{thm:main:1} implies~\Cref{thm:trace} and~\Cref{thm:stretch} for~$g\ge 3$. 
Our proof of the Torelli case of~\Cref{thm:main:1} does not work for~$g=2$, and for this situation 
we directly prove~\Cref{thm:trace} and~\Cref{thm:stretch} by using ad hoc examples (see~\Cref{sec:proof:g2}).

\subsection{Proof strategy for~\Cref{thm:main:1}}
\label{sec:proof:strategy}
For a pair~$\alpha,\beta\subset S_g$ of filling multicurves, let 
$X = \left( |\alpha_i \cap \beta_j | \right)_{ij}$ be the matrix encoding the number 
of {geometric} intersections of the components of~$\alpha$ and~$\beta$. 
\smallskip

\noindent
The matrix~$XX^\top$ is primitive, hence by Perron--Frobenius theory its spectral 
radius equals its largest eigenvalue and is therefore an algebraic integer. 
Let~$d$ be its algebraic degree. We call the number~$d$ the {\em multicurve intersection 
degree} of~$\alpha$ and~$\beta$.
\smallskip

\noindent
Our proof is  based on the following existence result.

\begin{thm}
\label{thm:criterion:nonsplitting}
Let~$\alpha,\beta \subset S$ be a pair of filling multicurves having multicurve intersection 
degree~$d$. For~$\varepsilon \in \mathbb{Z}\setminus\{0\}$, there exists~$n \in \mathbb Z_{>0}$ 
such that the mapping class~$T^{n}_\alpha \circ T^{n\varepsilon}_\beta$ is pseudo-Anosov 
with stretch factor~$\lambda$ of degree~$2d$.
\end{thm}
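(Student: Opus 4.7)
The plan is to invoke the Thurston-Veech construction, translate the stretch-factor degree question into one about the non-squareness of a concrete element of $K := \mathbb{Q}(\mu)$, and then complete the argument by a reduction-modulo-primes step.

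First, I would use the Thurston-Veech construction to endow $S$ with a flat structure on which $T_\alpha$ and $T_\beta$ act by parabolic elements of $\mathrm{SL}(2, \mathbb{R})$. Computing the derivative of $T_\alpha^n \circ T_\beta^{n\varepsilon}$ as a product of two such parabolics, one reads off that its signed trace is an affine function of $n^2 \varepsilon \mu$, where $\mu$ is the Perron-Frobenius eigenvalue of $X X^\top$. The mapping class is pseudo-Anosov precisely when the absolute value of this trace exceeds $2$; this is automatic for one sign of $\varepsilon$ and holds for all sufficiently large $n$ for the other sign. In either case, the stretch factor satisfies $\lambda + \lambda^{-1} = 2 + n^2 \varepsilon \mu$ up to an overall sign, hence $\mathbb{Q}(\lambda + \lambda^{-1}) = \mathbb{Q}(\mu)$ and the trace field degree is exactly $d$.

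Since $\lambda$ satisfies $t^2 - (\lambda + \lambda^{-1})\,t + 1 = 0$ over the trace field, the stretch-factor degree is $d$ or $2d$, the latter occurring precisely when the discriminant
\[
(\lambda + \lambda^{-1})^2 - 4 \;=\; n^2 \varepsilon \mu \cdot \bigl(4 + n^2 \varepsilon \mu\bigr)
\]
fails to be a square in $K$. Dividing out the square $n^2$, it suffices to find one $n \in \mathbb{Z}_{>0}$ such that $\xi(4 + n^2 \xi)$ is not a square in $K$, where $\xi := \varepsilon \mu$.

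The main obstacle is this last existence statement. I would establish it by reduction modulo primes. Choose a rational prime $p$ large enough that some prime $\mathfrak{p}$ of $K$ above $p$ avoids $2\xi$, and let $\bar\xi \in k_\mathfrak{p}$ denote the reduction of $\xi$. The polynomial $P(X) = \bar\xi(4 + \bar\xi X^2) \in k_\mathfrak{p}[X]$ is squarefree, since $\bar\xi \neq 0$ and $p \neq 2$. The Weil bound on character sums gives $\bigl| \sum_{\bar n \in k_\mathfrak{p}} \chi(P(\bar n)) \bigr| = O\bigl(\sqrt{|k_\mathfrak{p}|}\bigr)$ for the quadratic character $\chi$ on $k_\mathfrak{p}^\ast$, whereas this sum would be close to $|k_\mathfrak{p}|$ if $P$ took only residue values. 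Hence for $|k_\mathfrak{p}|$ sufficiently large there is some $\bar n \in k_\mathfrak{p}$ with $P(\bar n)$ a nonresidue. Lifting to any positive integer $n \equiv \bar n \pmod p$ (chosen also large enough for the pseudo-Anosov criterion when $\varepsilon < 0$), the element $\xi(4 + n^2 \xi)$ reduces modulo $\mathfrak{p}$ to a nonresidue, is therefore not a square in $K_\mathfrak{p}$, and thus not in $K$, as required.
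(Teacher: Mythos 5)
Your argument follows the paper up to the key reduction: both proofs use the Thurston--Veech representation to identify the trace field with $K=\mathbb{Q}(\mu)$ and reduce the theorem to finding $n$ for which $\xi(4+n^2\xi)$, $\xi=\varepsilon\mu$, is not a square in $K$ (the sign discrepancy with the paper's $2-\varepsilon(2nr)^2$ is immaterial). From there the paper argues globally, by contradiction: if the discriminant were a square for every $n$, taking norms to $\mathbb{Q}$ produces an integer polynomial $Q$ with $Q(n^2)$ a square for all $n$, whence by Murty's theorem $Q(t^2)$ is a polynomial square, contradicting separability of the irreducible $Q$. Your local route is genuinely different and attractive, but as written it has a gap at the lifting step. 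The residue field $k_{\mathfrak{p}}=\mathcal{O}_K/\mathfrak{p}$ is $\mathbb{F}_{p^f}$ with $f$ the residue degree, and when $f>1$ the class $\bar n\in k_{\mathfrak{p}}$ produced by your character-sum count need not be the reduction of any rational integer: ``$n\equiv\bar n\pmod p$'' only makes sense when $\bar n$ lies in the prime subfield. Restricting the sum $\sum\chi(P(x))$ to $x\in\mathbb{F}_p$ is not covered by the Weil bound you invoke (which concerns the full field $k_{\mathfrak{p}}$ and a character of that field), and counting does not rescue the step: for $f=2$ the set $\{4+x^2\bar\xi:\ x\in\mathbb{F}_p\}$ has only about $p$ elements while $\mathbb{F}_{p^2}$ has about $p^2/2$ squares, so a priori all of these values could be squares and $\bar\xi$ itself a square.

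The repair is easy and worth stating explicitly: choose $p$ admitting a prime $\mathfrak{p}\mid p$ of residue degree one with $\mathfrak{p}\nmid 2\xi$. Infinitely many primes of $K$ have residue degree one (by Chebotarev, or elementarily via the prime divisors of the values of the minimal polynomial of $\mu$ together with Dedekind's factorization theorem), while only finitely many divide the nonzero algebraic integer $2\xi$. Then $k_{\mathfrak{p}}=\mathbb{F}_p$, your (in fact exact) evaluation of the quadratic character sum for the squarefree quadratic $P$ yields at least $(p-1)/2$ nonresidue values, each class lifts to a positive integer $n$, which may additionally be taken large enough that $|\mathrm{tr}\,\rho(f)|>2$, and your final step is sound: since $\xi(4+n^2\xi)$ is a $\mathfrak{p}$-unit, a square root in $K$ would be a $\mathfrak{p}$-unit as well and would force the reduction to be a square in $k_{\mathfrak{p}}$. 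With this one fix your proof is complete; it trades the paper's appeal to Murty's square-values theorem for a finite-field argument, at the cost of needing the degree-one prime, which your write-up currently omits.
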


\noindent
Assuming~\Cref{thm:criterion:nonsplitting}, what remains to be done in order to prove the first part 
of~\Cref{thm:main:1} is to construct all multicurve intersection degrees~$1\le d \le 3g-3$ 
on~$S_g$ for~$g\ge2$. By the Thurston--Veech construction, the trace 
field degree of the resulting pseudo-Anosov maps equals exactly the multicurve 
intersection degree of the multicurves~$\alpha$ and~$\beta$ used in the construction~\cite{Th,Veech:construction}.
Hence, we are done by setting~$\varepsilon =1$ in~\Cref{thm:criterion:nonsplitting}. 
\smallskip

\noindent
In order to prove the Torelli part of~\Cref{thm:main:1}, we construct the multicurves~$\alpha$ 
and~$\beta$ realising the multicurve intersection degrees~$1\le d \le 3g-3$ in such a way 
that~$T_\alpha \circ T_\beta^{-1}$ is an element of~$\mathcal{I}(S_g)$. 
We will ensure this by choosing multicurves~$\alpha$ and~$\beta$ which consist of components 
that are separating or that come in bounding pairs, where for each bounding pair one 
of the curves is a component of~$\alpha$ and the other is a component of~$\beta$. 
We can then finish the proof of~\Cref{thm:main:1} by setting~$\varepsilon = -1$ in~\Cref{thm:criterion:nonsplitting}.
To see this, note that if~$T_\alpha \circ T_\beta^{-1}$ is an element of~$\mathcal{I}(S_g)$,
then so is~$T_\alpha^n \circ T_\beta^{-n}$.
\smallskip

\noindent
We note that our examples of multicurves~$\alpha$ and~$\beta$ that we use in order to 
construct examples in the Torelli group~$\mathcal{I}(S_g)$ cannot yield multicurve intersection degrees 
greater than one on the surface~$S_2$. Indeed, there exist no bounding pairs on~$S_2$ 
and a multicurve can have at most one separating component. 
\smallskip 

\noindent
{The constructed examples in~\cite{Strenner:algebraic} do not belong to the Torelli group. Indeed, 
the first components of the considered multicurves 
$A$ and $B$~\cite[Figure 6.1]{Strenner:algebraic}  are nonzero in homology and they intersect nontrivially.}
\smallskip

\noindent
To find the suitable multicurves~$\alpha$ and~$\beta$ realising 
all possible multicurve intersection degrees~$1\le d \le 3g-3$ is the main technical contribution 
in this article.

\subsection{Proof of~\Cref{thm:trace} and~\Cref{thm:stretch} for $g=2$}
\label{sec:proof:g2}
For the genus two surface we give ad hoc examples. 
For this purpose, we use the flipper software~\cite{flipper} and start with the genus 
two surface with one puncture~$S_{2,1}$, see the figure below. In flipper, 
mapping classes are defined via Dehn twists along the curves~$a,b,c,d,e,f$.
\medskip

\begin{minipage}{.6\textwidth}
Consider the separating curve~$\gamma$ depicted in blue. By the chain relation~$T_\gamma = (T_a\circ T_f)^6$.
We consider the following three conjugates of~$T_\gamma$:
\begin{enumerate}
\item $T_1 = (T_{f}\circ T_a\circ T_b) \circ T_\gamma \circ (T_{f}\circ T_a\circ T_b)^{-1}$,
\item $T_2 = (T_{c}\circ T_b) \circ T_\gamma \circ (T_{c}\circ T_b)^{-1}$,
\item $T_3 = (T_{a}\circ T_b) \circ T_\gamma \circ (T_{a}\circ T_b)^{-1}$.
\end{enumerate}
\end{minipage}
\hfill
\begin{minipage}{.35\textwidth}
\def\svgwidth{145pt}
%% Creator: Inkscape 1.1.1 (c3084ef, 2021-09-22), www.inkscape.org
%% PDF/EPS/PS + LaTeX output extension by Johan Engelen, 2010
%% Accompanies image file '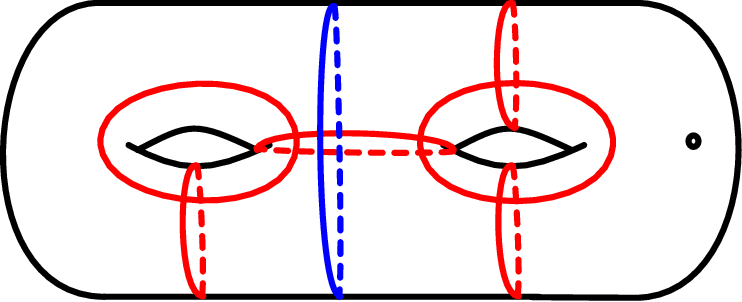' (pdf, eps, ps)
%%
%% To include the image in your LaTeX document, write
%%   \input{<filename>.pdf_tex}
%%  instead of
%%   \includegraphics{<filename>.pdf}
%% To scale the image, write
%%   \def\svgwidth{<desired width>}
%%   \input{<filename>.pdf_tex}
%%  instead of
%%   \includegraphics[width=<desired width>]{<filename>.pdf}
%%
%% Images with a different path to the parent latex file can
%% be accessed with the `import' package (which may need to be
%% installed) using
%%   \usepackage{import}
%% in the preamble, and then including the image with
%%   \import{<path to file>}{<filename>.pdf_tex}
%% Alternatively, one can specify
%%   \graphicspath{{<path to file>/}}
%% 
%% For more information, please see info/svg-inkscape on CTAN:
%%   http://tug.ctan.org/tex-archive/info/svg-inkscape
%%
\begingroup%
  \makeatletter%
  \providecommand\color[2][]{%
    \errmessage{(Inkscape) Color is used for the text in Inkscape, but the package 'color.sty' is not loaded}%
    \renewcommand\color[2][]{}%
  }%
  \providecommand\transparent[1]{%
    \errmessage{(Inkscape) Transparency is used (non-zero) for the text in Inkscape, but the package 'transparent.sty' is not loaded}%
    \renewcommand\transparent[1]{}%
  }%
  \providecommand\rotatebox[2]{#2}%
  \newcommand*\fsize{\dimexpr\f@size pt\relax}%
  \newcommand*\lineheight[1]{\fontsize{\fsize}{#1\fsize}\selectfont}%
  \ifx\svgwidth\undefined%
    \setlength{\unitlength}{355.93286666bp}%
    \ifx\svgscale\undefined%
      \relax%
    \else%
      \setlength{\unitlength}{\unitlength * \real{\svgscale}}%
    \fi%
  \else%
    \setlength{\unitlength}{\svgwidth}%
  \fi%
  \global\let\svgwidth\undefined%
  \global\let\svgscale\undefined%
  \makeatother%
  \begin{picture}(1,0.40567815)%
    \lineheight{1}%
    \setlength\tabcolsep{0pt}%
    \put(0,0){\includegraphics[width=\unitlength]{S21.eps}}%
    \put(0.07968608,0.20988332){\makebox(0,0)[lt]{\lineheight{1.25}\smash{\begin{tabular}[t]{l}$a$\end{tabular}}}}%
    \put(0.51143068,0.25329493){\makebox(0,0)[lt]{\lineheight{1.25}\smash{\begin{tabular}[t]{l}$b$\end{tabular}}}}%
    \put(0.8542235,0.21469972){\makebox(0,0)[lt]{\lineheight{1.25}\smash{\begin{tabular}[t]{l}$c$\end{tabular}}}}%
    \put(0.18349477,0.05049516){\makebox(0,0)[lt]{\lineheight{1.25}\smash{\begin{tabular}[t]{l}$f$\end{tabular}}}}%
    \put(0.72664694,0.32681021){\makebox(0,0)[lt]{\lineheight{1.25}\smash{\begin{tabular}[t]{l}$d$\end{tabular}}}}%
    \put(0.72381958,0.05381225){\makebox(0,0)[lt]{\lineheight{1.25}\smash{\begin{tabular}[t]{l}$e$\end{tabular}}}}%
    \put(0.48692287,0.05049516){\makebox(0,0)[lt]{\lineheight{1.25}\smash{\begin{tabular}[t]{l}$\gamma$\end{tabular}}}}%
  \end{picture}%
\endgroup%

\end{minipage}
 \medskip
 
\noindent Obviously $T_\gamma,T_1,T_2,T_3 \in \mathcal{I}(S_{2,1})$.
For each even degree $d \in \{2,4,6\}$, we exhibit a word in the above elements.  
We check that the mapping class is pseudo-Anosov with singularity pattern $(1,1,1,1;0)$, 
and we compute its stretch factor by using flipper~\cite{flipper}. The vector
$(1,1,1,1;0)$ means that the invariant foliations have four $3$-pronged type singularities, 
and a $2$-pronged one at the puncture. 
\medskip

\begin{center}
\begin{tabular}{p{5,5cm} | cp{2,5cm}}
pseudo--Anosov mapping class $[f]\in \mathrm{Mod}(S_{2,1})$ & minimal polynomial of $\lambda(f)$ \\
\hline
\hline
$T_\gamma\circ T_1 \circ T_2^{-1}$ & $t^2 - 66t + 1$ \\
$T_\gamma\circ T_1 \circ T_2$ &  $t^4 - 72t^3 + 110t^2 - 72t + 1$ \\
$T_\gamma\circ T_1 \circ T_2 \circ T_3$ & $t^6 - 266t^5 + 143t^4 - 204t^3 + 143t^2 - 266t + 1$
\end{tabular}
\end{center}
\medskip

\noindent In order to obtain elements in $\mathrm{Mod}(S_{2})$, we appeal to the forgetful map. 
Since our examples do not have a $1$-pronged singularity at the puncture, {we can fill it in order}
to get a pseudo-Anosov mapping class, with the same stretch factor, in 
$\mathrm{Mod}(S_{2})$, see~\cite[Lemma 2.6]{HironakaKin} for details. This completes the proof {of~\Cref{thm:stretch}} for~$g=2$. 
{Now for each example, the minimal polynomial of $\lambda(f)$ is reciprocal and has degree $2d$ (for $d=1,2,3$ respectively). Hence
the minimal polynomial of $\lambda(f) + \lambda(f)^{-1}$ has degree $d$ as required, and~\Cref{thm:trace} is proved.}

\subsection{Explicit examples}
\label{sec:explicit}
In~\cite[Problem 10.4]{Margalit:question}, Margalit asks for explicit examples 
of pseudo-Anosov maps with specific stretch factor degrees. Our construction of multicurves  
allows us to do so for small genera. 
\smallskip

\noindent
More precisely, we first provide an almost explicit construction of multicurves with intersection 
degrees~$1<d\le 3g-3$ for which the pseudo-Anosov mapping class~$T_\alpha \circ T_\beta$ has stretch 
factor degree $2d$. In the setting of~\Cref{thm:criterion:nonsplitting}, this means 
for~$\varepsilon=1$ one can choose~$n=1$. Our argument uses~\cite[Theorem~6]{LL} (see~\Cref{sec:proof} for details).
Unfortunately, for the multicurves we construct in the Torelli case, the criterion from \cite{LL} does not apply. Hence our 
need to use~\Cref{thm:criterion:nonsplitting} for the proof of~\Cref{thm:main:1}, which provides a slightly 
less explicit result. 
\smallskip

\noindent
Aided by the computer, we can subsequently find completely explicit examples of multicurves 
and therefore entirely explicit pseudo-Anosov mapping classes on~$S_g$ arising from the 
Thurston--Veech construction realising the maximal stretch factor degree~$6g-6$, for 
all genera~$g\le 201$. \smallskip

\noindent {We emphasise that this construction, contained in \Cref{sec:proof}, 
is independent of the proof of our main results, \Cref{thm:trace}, \Cref{thm:stretch} and~\Cref{thm:main:1}.}

\subsection{Odd degree stretch factors.}

\noindent
While~\Cref{thm:criterion:nonsplitting} provides the existence of field 
extensions~$\mathbb{Q}(\lambda) : \mathbb{Q}(\lambda+\lambda^{-1})$ 
of degree two for mapping classes in~$\langle T_\alpha, T_\beta \rangle$, 
realising extensions of degree one seems to be more mysterious. For example, 
Veech~\cite{Veech} discovered a family of Hecke groups~$\langle T_\alpha, T_\beta \rangle = 
\langle \left(\begin{smallmatrix} 1 & \lambda_q \\ 0 & 1 \end{smallmatrix}\right) , 
\left(\begin{smallmatrix} 1 & 0\\ -\lambda_q & 1 \end{smallmatrix}\right) \rangle$, 
where~$\lambda_q = 2\cos \pi/q$ for~$q\geq 3$. The genus of the surface~$S_g$ is~$(q-1)/2$ for 
odd~$q$. For~$q=7,9$ one can find stretch factors of degree one over the trace 
field~$\mathbb{Q}(\lambda_q)$: for instance $T_\alpha \circ T_\beta^{-1}$ is an example for $q=7$, 
and we refer to~\cite{Boulanger} for $q=9$. 
However, it is conjectured (see~\cite[Remark 9]{Hanson2008GeneralizedCF}) 
that stretch factors of degree one over~$\mathbb{Q}(\lambda_q)$ do not exist for odd~$q\geq 11$.
\smallskip

\noindent
It remains an open problem to construct odd stretch factor degrees in the 
Torelli group~$\mathcal{I}(S_g)$.

\subsection{On Thurston's upper bound}
One may impose restrictions on the foliations fixed by the pseudo-Anosov map, for example by prescribing 
the stratum, that is, the number of singularities as well as their orders as $k$-pronged singularities.
It turns out that Thurston's upper bound for the stretch factor degree becomes
\[
2g-2 + \# \{\textrm{odd singularities}\} \leq 2g-2 + 4g-4 = 6g-6.
\]
In the context of the Thurston--Veech construction, the number and type of singularities can be read off 
directly from the geometry of the complement~$S\setminus (\alpha \cup \beta)$: the number of~$k$-pronged 
singularities coincides with the number of~$2k$-gons in the complement. 
\smallskip

\noindent 
It is a consequence of our proof of~\Cref{thm:main:1} that the upper bound 
\[
2g-2 + \# \{\textrm{odd singularities}\}
\] 
for the stretch factor degree is sharp for every~$g\geq 3$, and that examples realising this upper bound 
can be taken in the Torelli group.

\subsection{A natural field extension from the perspective of curves}

A pair of filling multicurves~$\alpha,\beta \subset S_g$ naturally determines a bipartite graph whose 
vertices correspond to curve components and the number of edges between each pair of vertices 
equals the number of intersection points of the respective curve components. The adjacency 
matrix of this graph is~$\Omega= \left(\begin{smallmatrix}0 & X \\ X^\top & 0\end{smallmatrix}\right)$. 
Clearly, the square root~$\sqrt{\mu}$ of the spectral radius~$\mu$ of $XX^\top$ equals the spectral 
radius of~$\Omega$. We call the algebraic degree of~$\sqrt{\mu}$ the {\em multicurve bipartite degree} 
of~$\alpha$ and~$\beta$. Similarly to the field extension~$\mathbb{Q}(\lambda) : \mathbb{Q}(\lambda+\lambda^{-1})$, 
also the field extension~{$\mathbb{Q}(\sqrt{\mu}) : \mathbb{Q}(\mu)$}
has degree one or two. {From the point of view of the Thurston--Veech construction 
one has $\lambda(f) + \lambda(f)^{-1} = 2 + \mu$ for~$f=T_\alpha\circ T_\beta^{-1}$ (see the proof of~\Cref{thm:criterion:nonsplitting}).}
We prove the following result.

\begin{thm} 
\label{multicurvedegreethm:bipartite}
Every even integer~$2 \le 2d \le 6g-6$ is realised as a multicurve bipartite degree 
on~$S_g$ for~$g\ge2$.
\end{thm}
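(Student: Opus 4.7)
Write $p(s)\in\mathbb{Z}[s]$ for the minimal polynomial of the Perron--Frobenius eigenvalue $\mu$ of $XX^\top$ over $\mathbb{Q}$. The multicurve bipartite degree equals $2d$ precisely when $\mu$ is not a square in $\mathbb{Q}(\mu)$. A clean sufficient condition is that
\[
N_{\mathbb{Q}(\mu)/\mathbb{Q}}(\mu)\;=\;\prod_{i=1}^{d}\mu_i\;=\;(-1)^d p(0)
\]
is not a perfect square in $\mathbb{Q}$: indeed, if $\mu=\rho^2$ with $\rho\in\mathbb{Q}(\mu)$, then by multiplicativity of the norm $N_{\mathbb{Q}(\mu)/\mathbb{Q}}(\mu)=N_{\mathbb{Q}(\mu)/\mathbb{Q}}(\rho)^2$ would be a rational square, which we wish to exclude.

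The plan is to reuse the filling multicurves $\alpha,\beta$ built in the proof of~\Cref{thm:main:1}, which realize every multicurve intersection degree $1\le d\le 3g-3$ on $S_g$ for $g\ge 2$, and to verify the non-square norm condition directly for these. In the base case $d=1$, this is transparent: a filling pair on $S_g$ whose biadjacency matrix is the $2\times 3$ all-ones matrix yields $\mu=6$, which is square-free, so the bipartite degree is $2$. For general $d$, the intersection matrices from our construction are assembled from controlled elementary building blocks, so the characteristic polynomial of $XX^\top$—and thus the minimal polynomial $p$ of $\mu$—can be computed inductively as the blocks are glued together, and the constant term $(-1)^d p(0)$ can be tracked along the way.

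If the resulting norm has a prime factor of odd multiplicity, we are done. Otherwise, I would modify the multicurves locally so as to rescale $\mu$ by a non-square integer without changing the intersection degree $d$. A natural such modification is to replace a single component of $\alpha$ or $\beta$ by two parallel copies, which duplicates a row or column of $X$ and rescales the relevant eigenvalue; if the duplicated component is separating or part of a bounding pair, the multicurves moreover remain Torelli-compatible, although~\Cref{multicurvedegreethm:bipartite} itself imposes no Torelli requirement.

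The main obstacle is the uniform arithmetic verification that $(-1)^d p(0)$ is a non-square for every $d$ and $g$. This reduces to controlling the $p$-adic valuations of a structured determinant associated to our block-decomposable intersection matrix, which in principle is amenable to induction on the number of building blocks; the subtlety lies in the fact that the \emph{minimal} polynomial of $\mu$, rather than the full characteristic polynomial of $XX^\top$, is the relevant object, so one must take care to isolate its constant term via a factorisation argument before invoking the squareness test.
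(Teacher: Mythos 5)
There is a genuine gap: your argument is a plan whose decisive step is exactly the part you leave open. The norm criterion is indeed sufficient (if $\mu=\rho^2$ with $\rho\in\mathbb{Q}(\mu)$, then $N_{\mathbb{Q}(\mu)/\mathbb{Q}}(\mu)=N_{\mathbb{Q}(\mu)/\mathbb{Q}}(\rho)^2$ is a rational, hence integer, square), but it is only sufficient, and for the matrices at hand it is in real danger of being inconclusive: when the minimal polynomial of $\mu$ is the full characteristic polynomial, $(-1)^dp(0)=\det(XX^\top)=\sum_S(\det X_S)^2$ by Cauchy--Binet, which is automatically a perfect square whenever $X$ happens to be square (and can be a square in other cases too); moreover in the constructions where $\chi_{XX^\top}$ is reducible you would first need to isolate the correct irreducible factor, which you acknowledge but do not do. Your fallback is also flawed as stated: replacing a single component by two parallel copies duplicates one column $x$ of $X$ and replaces $XX^\top$ by the rank-one perturbation $XX^\top+xx^\top$; it does not rescale $\mu$ by an integer. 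Even the correct version of this idea (taking $m$ parallel copies of \emph{every} component of $\beta$, so $\mu\mapsto m\mu$) gives $N(m\mu)=m^dN(\mu)$, which is again a square whenever $d$ is even, so the rescaling trick cannot repair precisely half of the cases. In short, no mechanism in the proposal guarantees the non-square condition uniformly in $d$ and $g$, and there is no route to the conclusion when the norm happens to be a square.

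For comparison, the paper sidesteps norms entirely: for the multicurve families already built with a free parameter $y$ (those proving~\Cref{thm:main:1}), the Eisenstein-type criterion of~\Cref{criterion_prop}, in the form of~\Cref{rk:root}, shows that $\chi_{XX^\top}(t^2,y)\in\mathbb{Z}[t,y]$ is irreducible, and Hilbert's irreducibility theorem then produces specialisations of $y$ for which $\chi_{XX^\top}(t^2)\in\mathbb{Z}[t]$ is irreducible of degree $2d$; this polynomial is then the minimal polynomial of $\sqrt{\mu}$, so the bipartite degree is $2d$ with no case analysis on constant terms. If you want to salvage your approach, you would need either a uniform computation of the relevant constant terms (plus a genuinely degree-preserving modification that works for even $d$), or to prove directly that $\sqrt{\mu}\notin\mathbb{Q}(\mu)$ by an irreducibility argument — at which point you are essentially reproducing the paper's proof. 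Also note that your $d=1$ base case asserts, without justification, that a filling pair on $S_g$ with biadjacency matrix the $2\times3$ all-ones matrix exists for every $g\ge2$; some care is needed there, since the most obvious degree-one examples (a filling pair of single curves) give $\mu=k^2$ and bipartite degree $1$, not $2$.
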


\subsection*{Organisation} In~\Cref{sec:new:nonsplitting} we prove~\Cref{thm:criterion:nonsplitting}, 
which is the nonsplitting criterion used to reduce~\Cref{thm:main:1} to the construction of certain 
kinds of pairs of multicurves. In~\Cref{sec:irreducibility:criterion} we introduce an 
irreducibility criterion for the characteristic polynomial of matrices of the form~$XX^\top$ 
which plays a central role throughout the rest of the article. Using this irreducibility criterion, 
we first give a proof of Thurston's claim as a warm-up in~\Cref{realisation_section}, before 
providing the multicurves needed to prove~\Cref{thm:main:1} in~\Cref{sec:non:orientable}. 
Finally, we provide some explicit examples in~\Cref{sec:proof}.

\subsection*{Acknowledgments} 
The authors thank Dan Margalit and Jean-Claude Picaud for inspiring discussions, Mark Bell for 
his help with the software flipper~\cite{flipper} and Curt McMullen for comments on an earlier version 
of the article. They are particularly grateful to Dan Margalit for pointing out a mistake in the formulation 
of the proof of~\Cref{thm:main:1} in an earlier version of the article.
{The authors also thank the anonymous referee for several useful suggestions, comments and corrections.}
The first author has been partially supported by the LabEx 
PERSYVAL-Lab (ANR-11-LABX-0025-01) funded by the French program Investissement 
d’avenir. The first author would like to thank the University of Fribourg and
the Center for Mathematical Modeling (CMM) at Universidad de Chile
for excellent working conditions.

%**********************************************************************
%**********************************************************************

\section{A nonsplitting criterion}
\label{sec:new:nonsplitting}
\noindent
In this section we prove~\Cref{thm:criterion:nonsplitting}, which is an algebraic criterion that allows us 
to deduce that the degree of the field extension~$\mathbb{Q}(\lambda(f)) : \mathbb{Q}(\lambda(f)+\lambda(f)^{-1})$ 
equals two for certain~$f$ which are a product of multitwists. Compare with~\cite[Theorem~6]{LL}. 
For convenience, 
we repeat the statement of~\Cref{thm:criterion:nonsplitting}:

\begin{nonumberthm}[\Cref{thm:criterion:nonsplitting}]
Let~$\alpha,\beta \subset S$ be a pair of filling multicurves having multicurve intersection degree~$d$. 
For every~$\varepsilon \in \mathbb{Z}\setminus\{0\}$, there exists~$n \in \mathbb Z_{>0}$ such that the mapping 
class~$T^{n}_\alpha \circ T^{n\varepsilon}_\beta$ is pseudo-Anosov with stretch factor~$\lambda$ 
of degree~$2d$.
\end{nonumberthm}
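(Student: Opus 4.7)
The plan is to combine a direct matrix computation from the Thurston--Veech construction with a specialization argument via Hilbert's irreducibility theorem.

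By the Thurston--Veech construction, $T_\alpha$ and $T_\beta$ are realised as affine homeomorphisms of a flat surface whose derivative matrices are the parabolics
\[
A = \begin{pmatrix}1 & \sqrt{\mu}\\ 0 & 1\end{pmatrix}, \qquad B = \begin{pmatrix}1 & 0\\ -\sqrt{\mu} & 1\end{pmatrix}
\]
in $\mathrm{SL}(2,\mathbb{R})$, where $\mu$ is the Perron--Frobenius eigenvalue of $XX^\top$ (an algebraic integer of degree $d$ over $\mathbb{Q}$). A direct multiplication gives that $A^n B^{n\varepsilon}$ has trace $2 - n^2\varepsilon\mu$. For $n$ large enough (depending on $\varepsilon$ and $\mu$) this has absolute value exceeding $2$, so $T_\alpha^n \circ T_\beta^{n\varepsilon}$ is pseudo-Anosov and its stretch factor $\lambda$ satisfies $\lambda + \lambda^{-1} = |2 - n^2\varepsilon\mu|$. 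In particular, the trace field coincides with $K_0 := \mathbb{Q}(\mu)$, which has degree $d$.

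Since $\lambda$ is a root of $t^2 - (\lambda+\lambda^{-1})t + 1 \in K_0[t]$, the claim $[\mathbb{Q}(\lambda):\mathbb{Q}] = 2d$ is equivalent to the irreducibility of this quadratic over $K_0$, equivalently to its discriminant
\[
(\lambda + \lambda^{-1})^2 - 4 \,=\, n^2 \varepsilon \mu \, (n^2 \varepsilon \mu - 4)
\]
not being a square in $K_0$. As $n^2$ is a rational square, the task reduces to producing a positive integer $n$, large enough to make the trace hyperbolic, such that $\varepsilon\mu(n^2\varepsilon\mu - 4)$ is not a square in $K_0$.

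I then produce such an $n$ via Hilbert's irreducibility theorem applied to the polynomial
\[
F(T,X) \,:=\, X^2 - \varepsilon^2\mu^2 T^2 + 4\varepsilon\mu \,\in\, K_0[T,X].
\]
This polynomial is irreducible: being monic quadratic in $X$, any factorisation would force its discriminant $4\varepsilon\mu(\varepsilon\mu T^2 - 4)$ to be a square in $K_0[T]$; but matching any candidate $(aT+b)^2$ forces $b = 0$ from the vanishing linear coefficient, whence the constant term $b^2 = 0$ would have to equal $-4\varepsilon\mu \neq 0$, a contradiction. Since every number field is Hilbertian, there are infinitely many integers $n$ for which $F(n,X)$ remains irreducible in $K_0[X]$. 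Taking any such $n$ also large enough to guarantee $|2 - n^2\varepsilon\mu| > 2$ finishes the proof.

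The principal step I expect to require care is the Thurston--Veech setup: verifying that one can normalise the flat surface so that the derivatives of $T_\alpha$ and $T_\beta$ are exactly the parabolic matrices with off-diagonal entry $\pm\sqrt{\mu}$, and identifying the trace field as $\mathbb{Q}(\mu)$. Once this is in place, the irreducibility of $F$ and the invocation of Hilbert irreducibility are completely routine.
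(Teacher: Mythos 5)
Your argument is correct, but the decisive step is handled quite differently from the paper. Both proofs share the same setup: the Thurston--Veech representation sending $T_\alpha,T_\beta$ to the parabolics with off-diagonal entries $\pm\sqrt{\mu}$, the trace computation giving $\lambda+\lambda^{-1}=|2-n^2\varepsilon\mu|$ (the paper works with $T_\alpha^{2n}T_\beta^{2n\varepsilon}$, an immaterial difference), the identification of the trace field with $K_0=\mathbb{Q}(\mu)$, and the reduction to showing that the discriminant $\varepsilon\mu(n^2\varepsilon\mu-4)$ is a nonsquare in $K_0$ for some $n$. Where you invoke Hilbert's irreducibility theorem for the polynomial $X^2-\varepsilon^2\mu^2T^2+4\varepsilon\mu$ over the number field $K_0$, the paper instead argues by contradiction entirely inside $\mathbb{Z}$: if the discriminant were a square in $K_0$ for every $n$, then taking norms to $\mathbb{Q}$ produces an integer polynomial $Q$ with $Q(n^2)$ a perfect square for all $n$ (and $Q(0)$ a square by a congruence argument), so by a theorem of Murty the polynomial $Q(t^2)$ is a polynomial square, contradicting its separability, which follows from the irreducibility of $Q$ as the reversed minimal polynomial of $\varepsilon\mu$. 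Your route is shorter and yields infinitely many good $n$ at once; the paper's route uses only elementary norm considerations plus Murty's result on square-valued polynomials and avoids Hilbert irreducibility in this theorem. The one point in your write-up that needs care is the citation: you need not merely that $K_0$ is Hilbertian (which gives specializations in $K_0$), but the standard refinement that the Hilbert subset of $K_0$ attached to your polynomial contains infinitely many \emph{rational integers} $n$ --- e.g.\ via the fact that a separable Hilbert subset of a finite extension of a Hilbertian field contains a Hilbert subset of the base field, combined with classical Hilbert irreducibility over $\mathbb{Q}$. This refinement is true and standard, so your proof stands, but as stated the appeal to Hilbertianity alone does not literally produce integer values of $n$.
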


\noindent
{We need the following lemma which is a small extension of a result of Murty~\cite[Theorem 4]{Ram}. It is 
probably well-known but we could not  find a reference so we include a proof here.}

{\begin{lem}
\label{lm:almost:prime}
Let $Q(t) \in \mathbb Z[t]$ be a polynomial taking an integral square value at all but finitely many positive integer specialisations.
Then $Q(t)$ is the square of a polynomial in~$ \mathbb Z[t]$.
\end{lem}
\begin{proof}
Let~$N\in \mathbb N$ be a positive integer such that~$Q(n)$ is a square for every~$n\in \mathbb N$ with~$n>N$. 
Let~$n\in \mathbb Z$. Consider a prime number~$p$ such that~$n + p > N$. The integer~$Q(n+p)$ is a square by assumption, and  
its reduction modulo~$p$ is~$Q(n)$. Thus,~$Q(n) \in \mathbb Z$ is a quadratic residue for all but finitely many prime numbers. 
So the value~$Q(n)$ is a square in~$\mathbb Z$ (see~\cite[Theorem 1 (end of the proof)]{Marshall}). Thus we can apply~\cite[Theorem 4]{Ram} to conclude that $Q(t)$ is the square of a polynomial in~$\mathbb Z[t]$.
\end{proof}
}

\begin{proof}[Proof of~\Cref{thm:criterion:nonsplitting}]
By the Thurston--Veech construction, there exists a 
representation~$\rho:\left< T_\alpha, T_\beta \right>\to \mathrm{PSL}_2(\mathbb{R})$ mapping~$T_\alpha$ to the 
matrix~$\left(\begin{smallmatrix} 1 & r \\ 0 & 1 \end{smallmatrix}\right)$ and~$T_\beta$ to the 
matrix~$\left(\begin{smallmatrix} 1 & 0 \\ -r & 1 \end{smallmatrix}\right)$, where~$r^2 = \mu$ is 
the spectral radius of the matrix~$XX^\top$ for the multicurves~$\alpha$ and~$\beta$.
Furthermore, the stretch factor~$\lambda(f)$ of~$f \in \left< T_\alpha, T_\beta \right>$ equals the 
spectral radius of~$\rho(f)$. Now, let us consider the product of 
multitwists~$f=T^{2n}_\alpha \circ T^{2 n\varepsilon}_\beta$.
A direct computation provides that the trace of~$\rho(f)$ equals~$\mathrm{tr}(\rho(f)) = 2-\varepsilon(2nr)^2$. 
Thus,~$\lambda(f)+\lambda(f)^{-1} = |2-\varepsilon(2nr)^2|$ and 
hence~$\mathbb{Q}(\lambda(f) + \lambda(f)^{-1})=\mathbb Q(\mu)=K$. Note that by assumption, the 
degree of the field extension~$K : \mathbb{Q}$ is~$d$, the multicurve intersection 
degree of~$\alpha$ and~$\beta$.
\smallskip

\noindent
Since~$\lambda=\lambda(f)$ solves the quadratic equation~$t^2-(\lambda+\lambda^{-1})t+1=0$,~$\lambda$ 
has degree~$1$ or~$2$ over~$K$. All {that} we need to do is find~$n \in \mathbb Z_{>0}$
such that~$\lambda \not \in K$, or equivalently such that the 
discriminant~$D=(2-\varepsilon(2nr)^2)^2-4=16\cdot n^2\cdot ((n\varepsilon \mu)^2 - \varepsilon\mu)$ 
of the quadratic equation is not a square in~$K$. We proceed by contradiction. 
Let~$\mu'=\varepsilon\mu$ and let us assume that~$(n \mu')^2 - \mu'$ is a square in~$K=\mathbb Q(\mu')$ 
for every~$n > 0$. Since the expression {$(n \mu')^2 - \mu'$} is invariant under the transformation~$n \mapsto -n$, 
we can assume that {$(n \mu')^2 - \mu'$} is a square for every~$n \in \mathbb{Z}\setminus\{0\}$. 
\smallskip

%{We will first prove that the norm of $(n \mu')^2 - \mu'$ is a square in $\mathbb Z$ for all $n\in \mathbb Z$.} \medskip

\noindent
{Let~$q \in \mathbb{Q}[t]$ be the minimal polynomial of~$\frac{1}{\mu'}$ over~$\mathbb Q$. Our strategy is to use \Cref{lm:almost:prime} to show the following claim.} 
\smallskip

\noindent
{\emph{Claim. The polynomial~$q(t^2)\in \mathbb{Q}[t]$ has a double root in~$\mathbb C$. }}
\smallskip

\noindent
{This claim directly leads to a contradiction. Indeed, the polynomial~$q \in \mathbb{Q}[t]$ is irreducible, and thus only 
has simple roots. Now each root~$0\neq a\in \mathbb C$ of~$q$ gives rise to two 
distinct roots~$\pm \sqrt{a}$ of~$q(t^2)$, contradicting the claim. 
In order to finish the proof of \Cref{thm:criterion:nonsplitting}, it therefore remains to prove the claim.}
\smallskip

\noindent
\emph{Proof of Claim.}
Let~$P= a_dt^d+a_{d-1}t^{d-1}+\dots+a_1t+a_0 \in \mathbb Q[t]$ be the minimal polynomial 
of~$\mu'$ over~$\mathbb Q$ with $a_d=1$. The Thurston--Veech construction implies that~$\mu$ is an eigenvalue 
of a square matrix with integer coefficients, so~{$\mu'$ is also an eigenvalue of a square matrix with integer coefficients, 
implying that~$P \in \mathbb Z[t]$}. \smallskip

\noindent
Thus,~$\mu'$ and~$n^2\mu'-1$ are algebraic {integers}, {and we can consider their norms~$N(\cdot)$.} Obviously, for~$\mu'$ one has~$N(\mu')=(-1)^d a_0$. Let us compute the norm of $n^2\mu'-1$. {The monic  polynomial $n^{2d}P\left( \frac{t+1}{n^2} \right) \in \mathbb Z[t]$ has degree $d$ and admits $n^2\mu'-1$ as a root. Moreover the fields $\mathbb Q(\mu')$ and $\mathbb Q(n^2\mu'-1)$ coincide, so they have the same degree over $\mathbb Q$, namely $d$. Hence $n^{2d}P\left( \frac{t+1}{n^2} \right)$ is the minimal polynomial of~$n^2\mu'-1$}. Inspecting its constant term, we deduce
\[
N(n^2\mu'-1) = (-1)^d \sum_{k=0}^d a_kn^{2d-2k}.
\]
{Since the norm is multiplicative, we have
%\begin{equation}
%\label{eq:square}
\[
N((n\mu')^2-\mu') = N(n^2\mu'-1)\cdot N(\mu') = a_0 \sum_{k=0}^d a_kn^{2d-2k} = Q(n^2),
\]
%\end{equation}
where}
\[
Q(t) = a_0 \sum_{k=0}^d a_k t^{d-k}.
\]
{Since by assumption $(n \mu')^2 - \mu'$ is a square for every~$n \in \mathbb{Z}\setminus\{0\}$, we deduce that~$Q(n^2)$ is a square for every~$n \in \mathbb{Z}\setminus\{0\}$. By~\Cref{lm:almost:prime}, $Q(t^2)$ is itself a square in $\mathbb Z[t]$.} \medskip

%We will show that $Q(t^2)$ is itself a square in $\mathbb Z[t]$.} \medskip
%
%\noindent
%{Let us first prove that~$Q(n^2)$ is a square in $\mathbb Z$ for $n=0$. By~\Cref{eq:square}, $Q(0) = a_0$.
%Let $p$ be a prime. We have $Q(p^2) = a_0 \ (\mathrm{mod}\ p)$. So, by the previous discussion, 
%$a_0$ is a quadratic residue modulo $p$. Since this is for any prime $p$, $a_0$ is also a square in~$\mathbb Z$.} \smallskip
%%By~\Cref{eq:norm}, we have $N(p^2\mu' -1) = (-1)^d\ (\mathrm{mod}\ p)$. Thus, by~\Cref{eq:square}, $Q(p^2) = (-1)^d N(\mu') \ (\mathrm{mod}\ p) = 
%
%\noindent
%Hence~$Q(t^2) \in \mathbb Z[t]$ is a polynomial taking integral square value at every integer specialisation. 
%By a result of Murty~\cite[{Theorem 4}]{Ram},~$Q(t^2)$ is the square of a polynomial.
%\medskip

\noindent
To finish the proof, observe that~$Q(t) = a_0\cdot t^dP\left( \frac1{t}\right) \in {\mathbb Z}[t]$. 
In particular,~$Q\left( \frac1{\mu'}\right)=0$ and $Q$ has degree $d$ over $\mathbb Q$. 
Since~$\mu'$ and~$\frac1{\mu'}$ generate the same field extension~$K:\mathbb Q$, the minimal polynomial~$q$ of~$\frac{1}{\mu'}$ over~$\mathbb Q$ is also 
of degree~$d$. This implies that~$Q$ must be a rational multiple of~$q$.  
In particular,~$q(t^2)$ is a rational multiple of~$Q(t^2)$ and therefore has a double root in~$\mathbb C$.  
%{It is in particular irreducible over~$\mathbb Q$} (and thus separable). Now each root~$0\neq a\in \mathbb C$ of~$Q$ gives rise to two 
%distinct roots~$\pm \sqrt{a}$ of~$Q(t^2)$, and conversely. 
%\referee{2.(4)(f)}{Done: $\mu'$ \\ and $1/\mu'$ always \\ generate the same \\ field extension, \\ this does not rely \\ on any property \\ of $\mu'$}
%Thus~$Q(t^2)$ is also separable, 
%and cannot be a square, contradicting the assumption. This concludes the proof of the theorem.
\end{proof}

\section{An irreducibility criterion}
\label{sec:irreducibility:criterion}

\noindent
The goal of this section is to present an algebraic criterion that allows us to deduce that  
certain characteristic polynomials of matrices of the form~$XX^\top$ are irreducible.
{We write~$E_{11}$ for the matrix with the top-left coefficient equal to~$1$ and all other coefficients zero. 
Furthermore, for a square matrix~$A$ with coefficients in a ring~$R$, we denote 
its characteristic polynomial by~$\chi_A = \det ( t\cdot \mathrm{Id} - A) \in R[t]$.}

\begin{prop}
\label{criterion_prop}
Let~$M$ be a square integer matrix, and let~$N$ 
be the principal submatrix of~$M$ obtained by deleting the first row and the first column. 
If~$M$ and~$N$ have no common eigenvalue, and if~$M$ has a  
simple eigenvalue~$\rho$, then the characteristic polynomial of~$\widetilde M = M + {c}y^pE_{11}$ 
is an irreducible element of~$\mathbb{Z}[t,y]$, for all~{$0\ne p \in \mathbb{N}$} and~$0\ne {c} \in\mathbb{Z}$.
\end{prop}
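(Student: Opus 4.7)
My plan is to reduce the claim to a clean application of Eisenstein's criterion. The starting point is that $\widetilde{M} = M + ay^{p}E_{11}$ is a rank-one perturbation of $M$ in the $(1,1)$-entry only. Cofactor expansion of $\det(tI - \widetilde{M})$ along the first row (equivalently, the matrix determinant lemma) gives the identity
\[
\chi_{\widetilde{M}}(t,y) \;=\; \chi_{M}(t) \;-\; a\,y^{p}\,\chi_{N}(t),
\]
since the $(1,1)$-minor of $tI-M$ is precisely $\chi_{N}(t)$. I would then view this as a polynomial $F(y)$ in $y$ over the UFD $R = \mathbb{Z}[t]$, of degree $p$, with leading coefficient $-a\chi_{N}(t)$, constant term $\chi_{M}(t)$, and all intermediate coefficients zero.

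The candidate prime for Eisenstein is $P(t) \in \mathbb{Z}[t]$, the monic minimal polynomial of the simple eigenvalue $\rho$, which is irreducible in $\mathbb{Z}[t]$. The three Eisenstein conditions all follow from the hypotheses: (i) $P \nmid -a\chi_{N}(t)$, because $\chi_{N}(\rho) \neq 0$ by the no-common-eigenvalue assumption and because $\deg P \geq 1$ prevents $P \mid a$; (ii) $P \mid \chi_{M}(t)$ by definition of the minimal polynomial; (iii) $P^{2} \nmid \chi_{M}(t)$, since $\rho$ is a simple root of $\chi_{M}$ and, by Galois-invariance of $\chi_{M} \in \mathbb{Z}[t]$, each conjugate of $\rho$ contributes with the same multiplicity, forcing $P$ to appear exactly to the first power. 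Eisenstein therefore delivers irreducibility of $F$ in $\mathbb{Q}(t)[y]$.

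To transfer irreducibility from $\mathbb{Q}(t)[y]$ to $\mathbb{Z}[t,y] = R[y]$, I would invoke Gauss's lemma. It suffices to check that $F$ is primitive as a polynomial in $y$ over $R$, i.e., that $\gcd_{\mathbb{Z}[t]}(-a\chi_{N}(t),\chi_{M}(t)) = 1$. Any common divisor in $\mathbb{Z}[t]$ divides the monic polynomial $\chi_{M}$ and is therefore primitive, while over $\mathbb{Q}[t]$ the polynomials $\chi_{M}$ and $\chi_{N}$ are coprime (they share no root in $\mathbb{C}$), so every primitive common divisor is a unit. I do not anticipate a serious obstacle; the only mildly subtle step is condition (iii), where one must be careful to upgrade simplicity of the single root $\rho$ to non-divisibility of $\chi_{M}$ by the full $P^{2}$ rather than merely by $(t-\rho)^{2}$.
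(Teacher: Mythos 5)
Your proposal is correct and follows essentially the same route as the paper: compute $\chi_{\widetilde M}(t,y)=\chi_M(t)-ay^p\chi_N(t)$, apply Eisenstein's criterion in $\bigl(\mathbb{Z}[t]\bigr)[y]$ with the minimal polynomial of the simple eigenvalue~$\rho$ as the prime, and use coprimality of $\chi_M$ and $a\chi_N$ (via monicity of $\chi_M$) for primitivity and Gauss's lemma. Your step (iii) can even be shortened, since $P^2\mid\chi_M$ already forces $(t-\rho)^2\mid\chi_M$ over $\mathbb{C}$, contradicting simplicity directly.
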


\begin{proof}
Our goal is to use Eisenstein's criterion on~$\chi_{\widetilde M} \in \mathbb{Z}[t,y] \cong \left(\mathbb{Z}[t]\right)[y]$, 
viewing it as a polynomial in the variable~$y$ and coefficients in~$\mathbb{Z}[t]$. {By multilinearity of the determinant,} 
we calculate $$\chi_{\widetilde M} (t,y)=\det(t\cdot \mathrm{Id}-{\widetilde M}) = -y^p {c}\chi_N(t) + \chi_M(t)$$
and notice that~${c}\chi_N$ and~$\chi_M$ are relatively prime in~$\mathbb{Z}[t]$. 
Indeed,~$\chi_M$ has leading coefficient~$+1$ and no root in common with~$\chi_N$ by our assumption 
that~$M$ and~$N$ have no eigenvalue in common. In particular, they have no common factor, which shows 
that~$\chi_{\widetilde M} \in  \left(\mathbb{Z}[t]\right)[y]$ is primitive. In order to apply Eisenstein's criterion, 
let~$\mu_\rho \in \mathbb{Z}[t]$ be the minimal polynomial of the simple eigenvalue~$\rho$ of~$M$. 
By assumption,~$\mu_\rho$ divides~$\chi_M$ exactly once, but it does not divide~$\chi_N$ since~$\chi_M$ 
and~$\chi_N$ have no common root. In particular, Eisenstein's criterion applies to show that 
the polynomial~$\chi_{\widetilde M} \in  \left(\mathbb{Z}[t]\right)[y] \cong \mathbb{Z}[t,y]$ is irreducible. 
\end{proof}

\begin{remark}\label{rk:root} 
\emph{
In the previous statement, one can easily replace $\chi_{\widetilde M}(t)$ by $\chi_{\widetilde M}(t^n)$ for any integer $n>0$. Indeed
$\chi_M(t^n)$ and $\chi_N(t^n)$ are still coprime and $\mu_\rho(t^n)$ divides~$\chi_M(t^n)$ exactly once, so 
Eisenstein's criterion applies.}
\end{remark}

\begin{remark}\emph{
Oscillatory matrices satisfy a stronger version of Perron--Frobenius theory, namely \emph{all} the 
eigenvalues are positive real, simple, and they strictly interlace when taking a principal submatrix~\cite{Ando}. 
Hence,~\Cref{criterion_prop} applies very cleanly to this class of matrices.}
\end{remark}

\noindent
We use~\Cref{criterion_prop} on the following two cases (\Cref{add_one_genus_lemma} and~\Cref{double_genus_lemma}).
\begin{lem}
\label{add_one_genus_lemma}
For $n\geq 1$, let
\[
N=
\left(
\begin{array}{c|ccc}
a_1 & a_2 & \dots & a_n \\
\hline 
a_2 &  &  &  \\
\vdots  &  & \ast &   \\
a_n &  &  & 
\end{array}
\right),
\hspace{0.5cm}
M=
\left(
\begin{array}{c|ccc}
0 & \alpha a_1 & \dots &\alpha a_n \\
\hline 
\alpha a_1 &  &  &  \\
\vdots  &  & N &   \\
\alpha a_n &  &  & 
\end{array}
\right)
\]
be square integer matrices with~$a_1\ge1$ and {$0 \ne \alpha \in \mathbb{Q}$}. 
If~$M$ is nonnegative and irreducible, and if~$\chi_N \in \mathbb{Z}[t]$ is irreducible, 
then the characteristic polynomial of~$\widetilde M = M + {c}y^2E_{11}$ is irreducible in~$\mathbb{Z}[t,y]$  
for all~$0\ne {c} \in\mathbb{Z}$.
\end{lem}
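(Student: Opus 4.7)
The plan is to apply Proposition~\ref{criterion_prop} with $p = 2$ to the pair $(M, N)$. Two hypotheses must be checked: (i) $M$ has a simple eigenvalue, and (ii) $M$ and $N$ share no common eigenvalue. Condition (i) follows at once from Perron-Frobenius, since by assumption $M$ is nonnegative and irreducible, so its spectral radius is a simple positive eigenvalue.

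The main work is condition (ii). Writing $M = \left(\begin{smallmatrix} 0 & v^\top \\ v & N \end{smallmatrix}\right)$ with $v = \alpha(a_1, \ldots, a_n)^\top$, I would first record the crucial identity $v = \alpha N e_1$, since $(a_1, \ldots, a_n)^\top$ is exactly the first column of $N$. Irreducibility of $\chi_N$ implies that $N$ is diagonalisable over $\mathbb{C}$ with simple spectrum $\tau_1, \ldots, \tau_n$; a Schur complement computation gives
\[
\chi_M(t) \;=\; t\,\chi_N(t) \;-\; v^\top \mathrm{adj}(tI - N)\,v,
\]
and evaluating at $t = \tau_k$ shows that $\tau_k$ being a common eigenvalue of $M$ and $N$ forces $(v^\top w_k)(\ell_k^\top v) = 0$, where $w_k$ and $\ell_k$ are corresponding right and left eigenvectors of $N$.

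Using $v = \alpha N e_1$, one computes $v^\top w_k = \alpha (N w_k)_1 = \alpha \tau_k (w_k)_1$, and symmetrically $\ell_k^\top v = \alpha \tau_k (\ell_k)_1$. The eigenvalue $\tau_k$ is nonzero, for otherwise $\chi_N$ would equal $t$, forcing $N = (0)$ and $a_1 = 0$, contradicting $a_1 \geq 1$. Hence a common eigenvalue would force the first entry of some right or left eigenvector of $N$ to vanish. But if $(w_k)_1 = 0$, writing $w_k = (0, u)^\top$ the eigenvector equation collapses to $N' u = \tau_k u$, where $N'$ is the lower-right $(n-1) \times (n-1)$ principal submatrix of $N$; the analogous reduction applies to $\ell_k$ via $N^\top$. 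Thus $\tau_k$ would be a root of $\chi_{N'}$, yet since $\chi_N$ is irreducible of degree $n$, it is the minimal polynomial of $\tau_k$ and so cannot divide $\chi_{N'}$ for degree reasons. This contradiction establishes (ii), and Proposition~\ref{criterion_prop} then yields the irreducibility of $\chi_{\widetilde M}$.

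The main subtlety is the passage from \emph{common eigenvalue of $M$ and $N$} to \emph{first entry of an eigenvector of $N$ vanishes}, which relies on the specific form $v = \alpha N e_1$ induced by the block structure; once this reduction is in place, the degree obstruction coming from the irreducibility of $\chi_N$ closes the argument.
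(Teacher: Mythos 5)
Your proof is correct, and it shares the paper's overall skeleton: both reduce to Proposition~\ref{criterion_prop} with $p=2$, both get the simple eigenvalue of $M$ from Perron--Frobenius, and both start from the bordered-determinant identity $\chi_M(t)=t\chi_N(t)-v^\top\mathrm{adj}(tI-N)v$. Where you diverge is in how the no-common-eigenvalue condition is verified. The paper's route is more elementary: it only needs to know that the correction term $q(t)=-v^\top\mathrm{adj}(tI-N)v$ is a \emph{nonzero} polynomial of degree at most $n-1$, which it gets by a single evaluation $q(0)=\pm\alpha^2 a_1\det N\neq 0$ (using the same column relation $v=\alpha Ne_1$ that you isolate); a common root of $\chi_M$ and the irreducible degree-$n$ polynomial $\chi_N$ would then be a root of $q$, impossible for degree reasons. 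Your route instead evaluates at each eigenvalue $\tau_k$, using that $\chi_N$ irreducible forces simple spectrum so that $\mathrm{adj}(\tau_kI-N)$ is a nonzero multiple of $w_k\ell_k^\top$, and then the relations $v^\top=\alpha e_1^\top N$, $v=\alpha Ne_1$ push a hypothetical common eigenvalue into the spectrum of the smaller principal submatrix $N'$, again contradicting irreducibility by degree. This costs more machinery (left/right eigenvectors, the rank-one adjugate fact) but gives a sharper structural explanation of why no eigenvalue can be shared. Two small points you leave implicit and should state: $\alpha\neq 0$, which is needed to conclude $(w_k)_1=0$ or $(\ell_k)_1=0$ and follows from the irreducibility of the nonnegative matrix $M$ together with $a_1\ge 1$; and the degenerate case $n=1$, where $(w_k)_1=0$ is outright impossible (equivalently $\chi_{N'}=1$ has no roots), so the argument closes there as well.
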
 

\begin{proof}
In order to use~\Cref{criterion_prop}, we need to show that~$M$ has a simple eigenvalue 
and that~$M$ and~$N$ share no eigenvalue. The former holds since~$M$ is nonnegative and irreducible, 
and in particular has a Perron--Frobenius eigenvalue which is simple, {see for example~\cite{Gantmacher}}. 
For the latter, we compute
$$\chi_{ M} (t) = t\chi_N(t) + q(t),$$ 
where~$q(t) \in \mathbb{Z}[t]$ is of degree at most~$n-1$. We claim that it is not the zero polynomial either. 
Indeed, we directly verify 
\begin{align*}
q(0) &= \det 
\left(
\begin{array}{c|ccc}
0 & - \alpha a_1 & \dots & -\alpha a_n \\
\hline 
-\alpha a_1 &  &  &  \\
\vdots  &  & - N &   \\
-\alpha a_n &  &  & 
\end{array}
\right)  \\
& =
\det 
\left(
\begin{array}{c|ccc}
\alpha^2 a_1 & 0 & \dots & 0 \\
\hline 
0 &  &  &  \\
\vdots  &  & - N &   \\
0 &  &  & 
\end{array}
\right) 
= \pm \alpha^2 a_1\det N \neq 0.
 \end{align*}
 Now if there existed a common root~$\lambda\in \mathbb{C}$ of~$\chi_M$ and~$\chi_N$,
 then~$\lambda$ would also be a root of~$q(t)$, 
 {as the equation~$\chi_{ M} (t) = t\chi_N(t) + q(t)$ implies~$q(\lambda) = \chi_M(\lambda) -\lambda\chi_N(\lambda) = 0$.
 Since~$\chi_N$ is irreducible, it is the minimal polynomial of~$\lambda$ over~$\mathbb{Q}$, and we assume it to be of degree~$n$. 
 But~$q(t) \in \mathbb{Z}[t] $ is a nonzero polynomial of degree at most~$n-1$ with root~$\lambda$, which is impossible 
 by the definition of the minimal polynomial. Hence,~$\chi_M$ and~$\chi_N$ have no common root~$\lambda\in \mathbb{C}$.}
\end{proof}
 
\begin{lem}
\label{double_genus_lemma}
For $n,m\geq 1$, let
\[
A=
\left(
\begin{array}{c|ccc}
a_1 & a_2 & \dots & a_n \\
\hline 
a_2 &  &  &  \\
\vdots  &  & \ast &   \\
a_n &  &  & 
\end{array}
\right),
\hspace{0.5cm}
B=
\left(
\begin{array}{c|ccc}
b_1 & b_2 & \dots & b_m \\
\hline 
b_2 &  &  &  \\
\vdots  &  & \ast &   \\
b_m &  &  & 
\end{array}
\right)
\]
be square integer matrices of size~$n$ and~$m$, respectively, with~$a_1, b_1 \ge 1$. 
Furthermore, let~$\alpha,\beta \ne 0$ such that 
\[
M=
\left(
\begin{array}{c|ccc|ccc}
0 & \alpha a_1 & \dots & \alpha a_n & \beta b_1 & \dots & \beta b_m \\
\hline 
\alpha a_1 &  &  &  & & &\\
\vdots  &  & A & & & &  \\
\alpha a_n &  &  & & & & \\
\hline
\beta b_1 & & & & & & \\
\vdots & & & & & B & \\
\beta b_m & & & & & &
\end{array}
\right)
\]
is a matrix with integer coefficients. If~$M$ is nonnegative and irreducible, and if~$\chi_A, \chi_B \in \mathbb{Z}[t]$ are irreducible 
and distinct, then the characteristic polynomial of~$\widetilde M = M + {c}y^2E_{11}$ is irreducible 
in~$\mathbb{Z}[t,y]$ for all~$0\ne {c} \in \mathbb{Z}$. 
\end{lem}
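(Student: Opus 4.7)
The plan is to deduce the statement from~\Cref{criterion_prop}, paralleling the argument of~\Cref{add_one_genus_lemma}. The principal submatrix of~$M$ obtained by deleting the first row and first column is block diagonal with diagonal blocks~$A$ and~$B$, so~$\chi_N = \chi_A \chi_B$. The matrix~$M$ is nonnegative and irreducible by assumption, hence it has a simple Perron--Frobenius eigenvalue, and the only hypothesis of~\Cref{criterion_prop} that requires work is that~$M$ and~$N$ share no eigenvalue.

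Writing $\vec a = (a_1,\ldots,a_n)^\top$ and $\vec b = (b_1,\ldots,b_m)^\top$, and using that $\vec a$ is simultaneously the first row and the first column of~$A$ (and similarly for $\vec b$ and~$B$), a Schur-complement expansion of $\det(tI - M)$ along the first row and column yields an identity of the form
\[
\chi_M(t) = t\,\chi_A(t)\chi_B(t) \;-\; \alpha^2 r_A(t)\,\chi_B(t) \;-\; \beta^2 r_B(t)\,\chi_A(t),
\]
where $r_A(t) := \vec a^{\,\top}\mathrm{adj}(tI_n - A)\,\vec a$ has degree at most $n-1$, and $r_B$ is defined analogously. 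The key technical point is that $r_A$ (and by symmetry $r_B$) is not the zero polynomial. Evaluating at $t=0$, the identity $\vec a = A e_1$ gives $\mathrm{adj}(A)\vec a = \det(A)\,e_1$, and therefore
\[
r_A(0) \;=\; (-1)^{n-1}\vec a^{\,\top}\mathrm{adj}(A)\vec a \;=\; (-1)^{n-1} a_1 \det(A).
\]
Irreducibility of $\chi_A$ forbids rational roots, so $\chi_A(0) = \pm \det(A) \ne 0$, and $a_1 \geq 1$ by hypothesis; hence $r_A(0) \ne 0$.

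To conclude, suppose $\lambda \in \mathbb{C}$ were a common root of $\chi_M$ and $\chi_N = \chi_A \chi_B$. After relabelling we may assume $\chi_A(\lambda) = 0$. Since $\chi_A$ and $\chi_B$ are distinct irreducible polynomials over~$\mathbb Q$, we have $\chi_B(\lambda) \ne 0$, and the displayed identity then forces $r_A(\lambda) = 0$. Thus the minimal polynomial $\chi_A$ of~$\lambda$ divides $r_A$; but $\deg \chi_A = n > \deg r_A$, so $r_A \equiv 0$, contradicting $r_A(0) \ne 0$. Hence $M$ and $N$ share no eigenvalue, and~\Cref{criterion_prop} applies to conclude that $\chi_{\widetilde M} \in \mathbb{Z}[t,y]$ is irreducible. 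The main obstacle is the verification that $r_A$ is nonzero; the rest is a routine reduction to~\Cref{criterion_prop}, and the symmetric role of $A$ and $B$ together with the distinctness hypothesis is precisely what prevents a spurious common factor from appearing.
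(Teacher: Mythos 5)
Your proposal is correct and follows essentially the same route as the paper: reduce to \Cref{criterion_prop} via the Perron--Frobenius eigenvalue of~$M$, expand $\chi_M = t\chi_A\chi_B - \alpha^2 r_A\chi_B - \beta^2 r_B\chi_A$, and rule out a common eigenvalue by showing $r_A(0)=\pm a_1\det A\neq 0$ together with the degree count $\deg r_A\le n-1<\deg\chi_A$. Your Schur-complement/adjugate formulation just makes explicit the polynomials $q_1,q_2$ that the paper obtains by expanding the first column, and the evaluation at $t=0$ is the same computation.
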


\begin{proof}
The proof is similar to the proof of~\Cref{add_one_genus_lemma}: the only thing to verify 
is that no eigenvalue of~$A$ or of~$B$ is also an eigenvalue of~$M$. Again, we compute
$$\chi_{ M} (t) = t\chi_A(t)\chi_B(t) \pm q_1(t)\chi_B(t) \pm q_2(t)\chi_A(t),$$ 
where~$q_1(t) \in \mathbb{Z}[t]$ is of degree at most~$n-1$ and~$q_2(t) \in \mathbb{Z}[t]$ is of 
degree at most~$m-1$. This is seen by developing the first column of the matrix~$tI-M$. The first 
coefficient is responsible for the summand~$t\chi_A(t)\chi_B(t)$, the next~$n$ coefficients are 
responsible for the summand~$\pm q_1(t)\chi_B(t)$ and the final~$m$ coefficients are responsible 
for the summand~$\pm q_2(t)\chi_A(t)$. We claim that neither among~$q_1(t)$ and~$q_2(t)$ is 
the zero polynomial. Indeed, by developing the first column 
of the matrix $t\mathrm{I}-M$, and evaluating at $t=0$, we get
\begin{align*}
q_1(0) &= \det 
\left(
\begin{array}{c|ccc}
0 & -\alpha a_1 & \dots & -\alpha a_n \\
\hline 
-\alpha a_1 &  &  &  \\
\vdots  &  & - A &   \\
-\alpha a_n &  &  & 
\end{array}
\right) \\
& =
\det 
\left(
\begin{array}{c|ccc}
\alpha^2 a_1 & 0 & \dots & 0 \\
\hline 
0 &  &  &  \\
\vdots  &  & - A &   \\
0 &  &  & 
\end{array}
\right) 
= \pm \alpha^2 a_1\det A,  
 \end{align*}
 which is not zero since $\chi_A$ is irreducible. Similarly,~$q_2(0) \ne 0$. 
 Now if there existed a common root~$\lambda\in \mathbb{C}$ of~$\chi_M$ and~$\chi_A$,
 then~$\lambda$ would also be a root of either~$\chi_B$ or~$q_1$. Since~$\chi_A$ and~$\chi_B$ 
 are irreducible and distinct, we must have~$q_1(\lambda) = 0$. But since~$\chi_A$ 
 is irreducible of degree~$n$, and~$q_1(t)$ is a nonzero polynomial of degree at most~$n-1$, 
 this is impossible. Similarly, no root of~$\chi_B$ can be a root of~$\chi_M$, which concludes 
 the proof.
\end{proof}

\begin{remark}
\label{k_blocks_remark}
\emph{One could formulate \Cref{double_genus_lemma} with~$k\ge2$ blocks $A_1,\dots,A_k$
of respective sizes~$n_1,\dots,n_k$, instead of~$k=2$. In this case, all the~$k$ characteristic polynomials 
$\chi_{A_i}$ should be irreducible and pairwise distinct. The argument is identical by considering
$$
\chi_{ M} (t) = t \prod_{i=1}^k \chi_{A_i} + \sum_{i=1}^k \pm q_i(t) \prod_{j\neq i} \chi_{A_j},
$$ 
where~$q_i(t) \in \mathbb{Z}[t]$ is of degree at most~$n_i-1$ and nonzero.}
\end{remark}

\section{Warm-up: proof of Thurston's claim}
\label{realisation_section}

\noindent
As a first illustration of our method, it is the goal of this section to provide
a pair of filling multicurves~$\alpha$ and~$\beta$ on~$S_g$ with multicurve intersection degree~$3g-3$. 
By~\Cref{thm:criterion:nonsplitting}, this validates Thurston's claim that the product of two 
multitwists can realise the maximal possible algebraic degrees of stretch factors:~$6g-6$. 
\smallskip

\noindent
Recall that the matrix encoding the number of intersections of the components of~$\alpha$ 
and~$\beta$ is denoted by~$X = \left( |\alpha_i \cap \beta_j | \right)_{ij}$ (see~\Cref{sec:proof:strategy}). 
In order to read off the matrix~$XX^\top$ from our figures, we use the following formula for 
its coefficients, which is a direct consequence of the definition of matrix multiplication: 
\[
(XX^\top)_{ij} = \sum_k |\alpha_i \cap \beta_k | \cdot | \beta_k \cap \alpha_j |.
\] 

\noindent
We start by realising, on the surface of genus~$g\ge 1$ with~$2$ boundary components, 
a pair of filling multicurves~$\alpha$ and~$\beta$ such that~$\chi_{XX^\top} \in \mathbb{Z}[t]$ 
is irreducible and of degree~$3g-1$. {On a surface with boundary, we say that a pair of multicurves 
is filling if they intersect transversally and if their complement consists of boundary-parallel 
annuli and of discs none of which is a bigon.}
We proceed by induction on~$g$.

\subsubsection*{For~$g=1$ with two boundary components} 
\label{g1_section}
We consider the {filling pair of} multicurves~$\alpha$ and~$\beta$ shown in~\Cref{genusone}, 
where one of the components of~$\beta$ has~$y-1$ parallel copies. Here, we think of~$y$ 
as a variable that we specify {to a value in~$\mathbb{Z}_{\ge 2}$} later on. 
\begin{figure}[h]
\def\svgwidth{120pt}
%% Creator: Inkscape 1.1.1 (c3084ef, 2021-09-22), www.inkscape.org
%% PDF/EPS/PS + LaTeX output extension by Johan Engelen, 2010
%% Accompanies image file '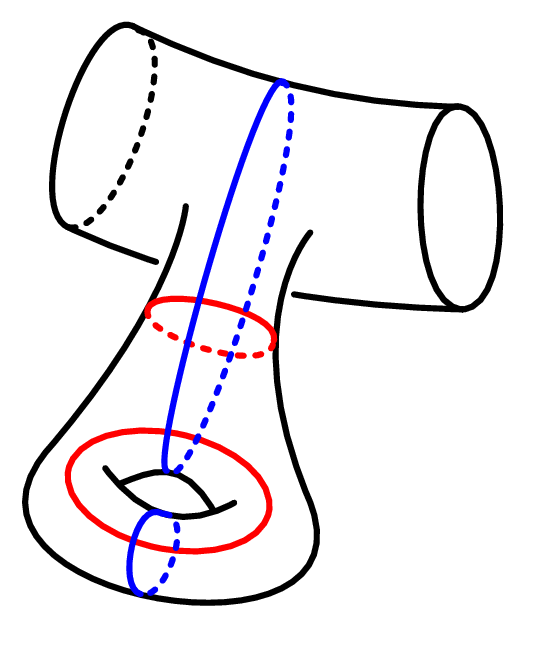' (pdf, eps, ps)
%%
%% To include the image in your LaTeX document, write
%%   \input{<filename>.pdf_tex}
%%  instead of
%%   \includegraphics{<filename>.pdf}
%% To scale the image, write
%%   \def\svgwidth{<desired width>}
%%   \input{<filename>.pdf_tex}
%%  instead of
%%   \includegraphics[width=<desired width>]{<filename>.pdf}
%%
%% Images with a different path to the parent latex file can
%% be accessed with the `import' package (which may need to be
%% installed) using
%%   \usepackage{import}
%% in the preamble, and then including the image with
%%   \import{<path to file>}{<filename>.pdf_tex}
%% Alternatively, one can specify
%%   \graphicspath{{<path to file>/}}
%% 
%% For more information, please see info/svg-inkscape on CTAN:
%%   http://tug.ctan.org/tex-archive/info/svg-inkscape
%%
\begingroup%
  \makeatletter%
  \providecommand\color[2][]{%
    \errmessage{(Inkscape) Color is used for the text in Inkscape, but the package 'color.sty' is not loaded}%
    \renewcommand\color[2][]{}%
  }%
  \providecommand\transparent[1]{%
    \errmessage{(Inkscape) Transparency is used (non-zero) for the text in Inkscape, but the package 'transparent.sty' is not loaded}%
    \renewcommand\transparent[1]{}%
  }%
  \providecommand\rotatebox[2]{#2}%
  \newcommand*\fsize{\dimexpr\f@size pt\relax}%
  \newcommand*\lineheight[1]{\fontsize{\fsize}{#1\fsize}\selectfont}%
  \ifx\svgwidth\undefined%
    \setlength{\unitlength}{256.68221301bp}%
    \ifx\svgscale\undefined%
      \relax%
    \else%
      \setlength{\unitlength}{\unitlength * \real{\svgscale}}%
    \fi%
  \else%
    \setlength{\unitlength}{\svgwidth}%
  \fi%
  \global\let\svgwidth\undefined%
  \global\let\svgscale\undefined%
  \makeatother%
  \begin{picture}(1,1.24007265)%
    \lineheight{1}%
    \setlength\tabcolsep{0pt}%
    \put(0,0){\includegraphics[width=\unitlength]{genusone.eps}}%
    \put(0.20303637,0.01249419){\makebox(0,0)[lt]{\lineheight{1.25}\smash{\begin{tabular}[t]{l}$y-1$ copies\end{tabular}}}}%
    \put(0.03412539,0.47581523){\makebox(0,0)[lt]{\lineheight{1.25}\smash{\begin{tabular}[t]{l}$\alpha_2$\end{tabular}}}}%
    \put(0.56260138,0.57846982){\makebox(0,0)[lt]{\lineheight{1.25}\smash{\begin{tabular}[t]{l}$\alpha_1$\end{tabular}}}}%
  \end{picture}%
\endgroup%

\caption{Two multicurves~$\alpha$ (in red) and~$\beta$ (in blue) on the surface of genus 
one with two boundary components. The multicurve~$\beta$ contains~$y-1$ parallel 
copies of one of its components.}
\label{genusone}
\end{figure}

\noindent
{Observe that $X$ is a matrix of size $2\times y$ (the multicurve $\beta$ has $y$ components).} 
We directly calculate 
\[ 
XX^\top = 
\begin{pmatrix}
4 & 2 \\
2 & y
\end{pmatrix}.
\]
We have~$\chi_{XX^\top}(t) = t^2 -(4+y)t+4(y-1)$ with discriminant~$y^2-8y+32$, which is not a square if~$y\ge12$. 
Indeed, in this case we have
\[
(y-3)^2 = y^2 - 6y + 9 > y^2 - 8y + 32 > y^2 -8y + 16 = (y-4)^2.
\]
In particular, for~$y\ge 12$ the polynomial~$\chi_{XX^\top}$ is irreducible.

\subsubsection*{For~$g> 1$ and two boundary components} 
\label{inductive_step}
For the inductive step, assume we have constructed 
on the surface of genus~$g\ge 1$ with~$2$ boundary components a {filling} pair of multicurves~$\alpha', \beta'$ such 
that the characteristic polynomial~$\chi' = \chi_{XX^\top} \in \mathbb{Z}[t]$ is irreducible and of degree~$3g-1$. 
Furthermore, we inductively assume that~$\alpha'_1$ is a simple closed curve that encircles all the handles 
of the surface, as illustrated in~\Cref{power2induction}. Take a surface of genus~$1$ and two 
boundary components, as in the case of genus~$g=1$, see~\Cref{genusone}, 
and denote its {filling pair of} multicurves by~$\alpha''$ and~$\beta''$. {We assume that the component of~$\beta''$ 
with multiple parallel copies has~$b-1$ of them.} 
Now glue its right boundary component to the left boundary component of the surface of genus~$g$, 
and add two new curves~$\alpha_0$ and~$\beta_0$ to the multicurves. 
The curve~$\alpha_0$ encircles all the handles of the newly formed surface, 
and the curve~$\beta_0$ twice intersects~$\alpha_0$ but no 
other multicurve component. Again, see~\Cref{power2induction} for an illustration. 
{By construction, the pair of multicurves~$\alpha, \beta$ is again filling.}

\begin{figure}[h]
\def\svgwidth{330pt}
%% Creator: Inkscape 1.1.1 (c3084ef, 2021-09-22), www.inkscape.org
%% PDF/EPS/PS + LaTeX output extension by Johan Engelen, 2010
%% Accompanies image file '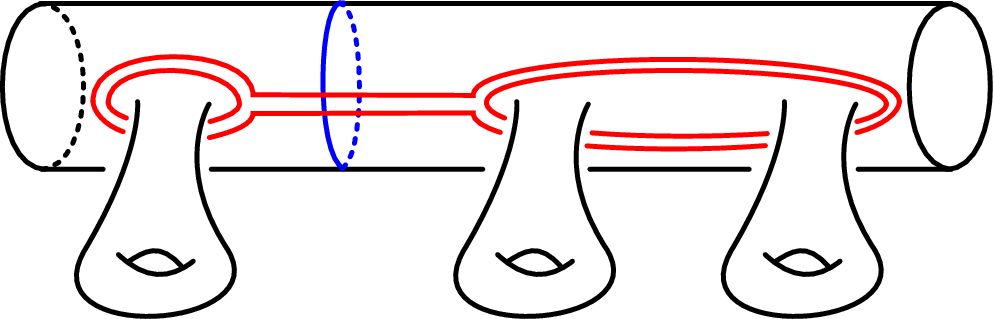' (pdf, eps, ps)
%%
%% To include the image in your LaTeX document, write
%%   \input{<filename>.pdf_tex}
%%  instead of
%%   \includegraphics{<filename>.pdf}
%% To scale the image, write
%%   \def\svgwidth{<desired width>}
%%   \input{<filename>.pdf_tex}
%%  instead of
%%   \includegraphics[width=<desired width>]{<filename>.pdf}
%%
%% Images with a different path to the parent latex file can
%% be accessed with the `import' package (which may need to be
%% installed) using
%%   \usepackage{import}
%% in the preamble, and then including the image with
%%   \import{<path to file>}{<filename>.pdf_tex}
%% Alternatively, one can specify
%%   \graphicspath{{<path to file>/}}
%% 
%% For more information, please see info/svg-inkscape on CTAN:
%%   http://tug.ctan.org/tex-archive/info/svg-inkscape
%%
\begingroup%
  \makeatletter%
  \providecommand\color[2][]{%
    \errmessage{(Inkscape) Color is used for the text in Inkscape, but the package 'color.sty' is not loaded}%
    \renewcommand\color[2][]{}%
  }%
  \providecommand\transparent[1]{%
    \errmessage{(Inkscape) Transparency is used (non-zero) for the text in Inkscape, but the package 'transparent.sty' is not loaded}%
    \renewcommand\transparent[1]{}%
  }%
  \providecommand\rotatebox[2]{#2}%
  \newcommand*\fsize{\dimexpr\f@size pt\relax}%
  \newcommand*\lineheight[1]{\fontsize{\fsize}{#1\fsize}\selectfont}%
  \ifx\svgwidth\undefined%
    \setlength{\unitlength}{476.51717061bp}%
    \ifx\svgscale\undefined%
      \relax%
    \else%
      \setlength{\unitlength}{\unitlength * \real{\svgscale}}%
    \fi%
  \else%
    \setlength{\unitlength}{\svgwidth}%
  \fi%
  \global\let\svgwidth\undefined%
  \global\let\svgscale\undefined%
  \makeatother%
  \begin{picture}(1,0.32085707)%
    \lineheight{1}%
    \setlength\tabcolsep{0pt}%
    \put(0,0){\includegraphics[width=\unitlength]{induction.eps}}%
    \put(0.62933222,0.20655681){\makebox(0,0)[lt]{\lineheight{1.25}\smash{\begin{tabular}[t]{l}$\cdots$\end{tabular}}}}%
    \put(0.2819539,0.27863436){\makebox(0,0)[lt]{\lineheight{1.25}\smash{\begin{tabular}[t]{l}$\beta_0$\end{tabular}}}}%
    \put(0.42531897,0.24515474){\makebox(0,0)[lt]{\lineheight{1.25}\smash{\begin{tabular}[t]{l}$\alpha_0$\end{tabular}}}}%
    \put(0.71914404,0.21049776){\makebox(0,0)[lt]{\lineheight{1.25}\smash{\begin{tabular}[t]{l}$\alpha_1'$\end{tabular}}}}%
    \put(0.15471844,0.21207782){\makebox(0,0)[lt]{\lineheight{1.25}\smash{\begin{tabular}[t]{l}$\alpha_1''$\end{tabular}}}}%
  \end{picture}%
\endgroup%

\caption{Two surfaces of {genus~$1$ and~$g$, respectively}, and two boundary components, glued together along one of their boundary components.
The curves~$\alpha'_1$ and~$\alpha''_1$ are shown, each encircling all the handles of their respective surface. The new curve~$\alpha_0$ 
encircles all the handles of the newly formed surface, and the new curve~$\beta_0$ runs along the glued boundary component. }
\label{power2induction}
\end{figure}

\noindent
Let~$A$ be the matrix~$XX^\top$ for the pair of multicurves~$\alpha', \beta'$, and let~$B$ be the matrix~$XX^\top$ for the 
pair of multicurves~$\alpha'', \beta''$. We define the multicurves~
\begin{align*}
\alpha &= \alpha_0 \cup \alpha' \cup \alpha'', \\
\beta &= \beta_0 \cup \beta' \cup \beta''.
\end{align*}
A quick computation gives
\[
A=
\left(
\begin{array}{c|ccc}
a_1 & a_2 & \dots & a_n \\
\hline 
a_2 &  &  &  \\
\vdots  &  & \ast &   \\
a_n &  &  & 
\end{array}
\right),
\hspace{0.5cm}
B=
\left(
\begin{array}{cccc}
4 & 2   \\
2 & b
\end{array}
\right).
\]
Let us choose~$b$ such that~$\chi_B$ is irreducible and distinct from~$\chi_A$. 
{We note~$a_1 = 4a$ for a positive integer~$a$. Indeed, since~$\alpha'_1$ is separating, 
its geometric intersection number with each component~$\beta'_k$ is a multiple of two, say~$2x_k$. In particular, 
we have $$a_1 = \sum_k |\alpha'_1 \cap \beta'_k | ^2 = 4 \sum_k x_k = 4a.$$} In the multicurve~$\beta$, 
we take~$y^2 - a - 1\ge 1$ parallel copies of~$\beta_0$, for~$y>0$ large enough. 
The matrix~$XX^\top$ for the multicurves~$\alpha$ and~$\beta$ takes the form
\[
XX^\top=
\left(
\begin{array}{c|ccc|cc}
4y^2 & a_1 & \dots & a_n & 4 & 2  \\
\hline 
a_1 &  &  &  & & \\
\vdots  &  & A & & &   \\
a_n &  &  & & & \\
\hline
4 & & & & 4 & 2  \\
2 & & & & 2 & b  
\end{array}
\right).
\]
{It is a crucial feature of the Thurston--Veech construction that for a filling pair of multicurves, 
such a matrix~$XX^\top$ is always primitive and hence irreducible~\cite{Th,Veech:construction}.} 
By~\Cref{double_genus_lemma}, the polynomial~$\chi_{XX^\top}\in\mathbb{Z}[t,y]$ is irreducible (recall that $\chi_{A}$ and~$\chi_{B}$ are irreducible). Hence, 
by Hilbert's irreducibility theorem, there exist infinitely many specifications of~$y$ (and in particular 
infinitely many specifications of~$y$ such that~$y^2-a-1>0$) 
with~$\chi_{XX^\top}\in\mathbb{Z}[t]$ irreducible.
{The degree of this polynomial equals the size of the matrix~$XX^\top$, which equals} 
$$3g-1 + 3 = 3(g+1)-1,$$ 
which is exactly what we wanted to show.
Finally, to justify our inductive assumption {on the curve~$\alpha'_1$, 
note that~$\alpha_0$ is again a curve that encircles all handles of the newly built surface of genus~$g+1$ with two boundary components.}

\subsubsection*{The closed case for~$g\ge2$.}
\label{closedcase}
Take any example of a {filling} pair of multicurves~$\alpha'$ and~$\beta'$ we constructed on the 
surface of genus~$g-1\ge1$ with two boundary components. {In particular, the curve~$\alpha'_1$ encircles 
all the~$g-1$ handles of the surface, and if  
\[
A=
\left(
\begin{array}{c|ccc}
a_1 & a_2 & \dots & a_n \\
\hline 
a_2 &  &  &  \\
\vdots  &  & \ast &   \\
a_n &  &  & 
\end{array}
\right)
\]
is the matrix~$XX^\top$ for the multicurves~$\alpha'$ and~$\beta'$, 
we have~$a_1 = 4a$ and~$\chi_A (t)$ is irreducible.}
We identify the two boundary components of the surface to increase the genus by one. 
Let~$\alpha_0$ be a longitude of the created handle, and let~$\beta_0$ run along the 
glued boundary, {see~\Cref{closed}}. 
Define the two new multicurves
\begin{align*}
\alpha &= \alpha_0 \cup \alpha' \\
\beta &= \beta_0 \cup \beta',
\end{align*}
where we take~$y^2-a$ copies of~$\beta_0$.
\begin{figure}[h]
\def\svgwidth{330pt}
%% Creator: Inkscape 1.1.1 (c3084ef, 2021-09-22), www.inkscape.org
%% PDF/EPS/PS + LaTeX output extension by Johan Engelen, 2010
%% Accompanies image file '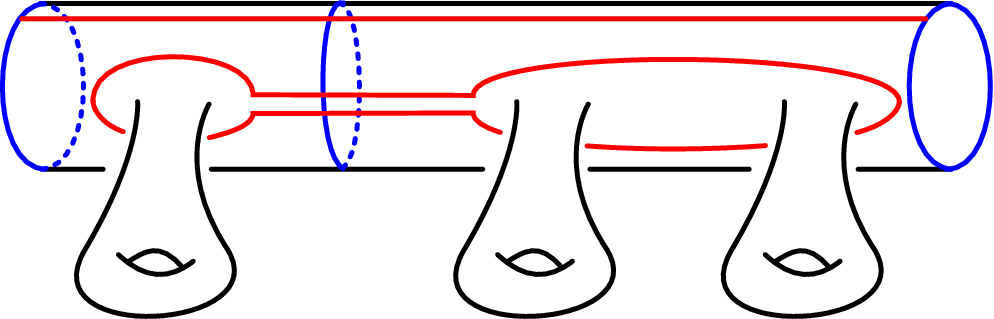' (pdf, eps, ps)
%%
%% To include the image in your LaTeX document, write
%%   \input{<filename>.pdf_tex}
%%  instead of
%%   \includegraphics{<filename>.pdf}
%% To scale the image, write
%%   \def\svgwidth{<desired width>}
%%   \input{<filename>.pdf_tex}
%%  instead of
%%   \includegraphics[width=<desired width>]{<filename>.pdf}
%%
%% Images with a different path to the parent latex file can
%% be accessed with the `import' package (which may need to be
%% installed) using
%%   \usepackage{import}
%% in the preamble, and then including the image with
%%   \import{<path to file>}{<filename>.pdf_tex}
%% Alternatively, one can specify
%%   \graphicspath{{<path to file>/}}
%% 
%% For more information, please see info/svg-inkscape on CTAN:
%%   http://tug.ctan.org/tex-archive/info/svg-inkscape
%%
\begingroup%
  \makeatletter%
  \providecommand\color[2][]{%
    \errmessage{(Inkscape) Color is used for the text in Inkscape, but the package 'color.sty' is not loaded}%
    \renewcommand\color[2][]{}%
  }%
  \providecommand\transparent[1]{%
    \errmessage{(Inkscape) Transparency is used (non-zero) for the text in Inkscape, but the package 'transparent.sty' is not loaded}%
    \renewcommand\transparent[1]{}%
  }%
  \providecommand\rotatebox[2]{#2}%
  \newcommand*\fsize{\dimexpr\f@size pt\relax}%
  \newcommand*\lineheight[1]{\fontsize{\fsize}{#1\fsize}\selectfont}%
  \ifx\svgwidth\undefined%
    \setlength{\unitlength}{476.51717061bp}%
    \ifx\svgscale\undefined%
      \relax%
    \else%
      \setlength{\unitlength}{\unitlength * \real{\svgscale}}%
    \fi%
  \else%
    \setlength{\unitlength}{\svgwidth}%
  \fi%
  \global\let\svgwidth\undefined%
  \global\let\svgscale\undefined%
  \makeatother%
  \begin{picture}(1,0.32085707)%
    \lineheight{1}%
    \setlength\tabcolsep{0pt}%
    \put(0,0){\includegraphics[width=\unitlength]{closed.eps}}%
    \put(0.65766279,0.21285249){\makebox(0,0)[lt]{\lineheight{1.25}\smash{\begin{tabular}[t]{l}$\cdots$\end{tabular}}}}%
    \put(0.27251038,0.25974732){\makebox(0,0)[lt]{\lineheight{1.25}\smash{\begin{tabular}[t]{l}$\beta'_1$\end{tabular}}}}%
    \put(0.41272761,0.17275439){\makebox(0,0)[lt]{\lineheight{1.25}\smash{\begin{tabular}[t]{l}$\alpha'_1$\end{tabular}}}}%
    \put(0.02380985,0.19863332){\makebox(0,0)[lt]{\lineheight{1.25}\smash{\begin{tabular}[t]{l}$\beta_0$\end{tabular}}}}%
    \put(0.93710655,0.19361453){\makebox(0,0)[lt]{\lineheight{1.25}\smash{\begin{tabular}[t]{l}$\beta_0$\end{tabular}}}}%
    \put(0.44651961,0.2751699){\makebox(0,0)[lt]{\lineheight{1.25}\smash{\begin{tabular}[t]{l}$\alpha_0$\end{tabular}}}}%
  \end{picture}%
\endgroup%

\caption{{The closed surface of genus~$g$ is obtained by identifying the left and the right boundary component of the 
depicted closed surface of genus~$g-1$ with two boundary components. The curve~$\alpha_0$ is a longitude of the created handle, 
and the curve~$\beta_0$ runs along the glued boundary.}}
\label{closed}
\end{figure}
{By construction, the pair of multicurves~$\alpha, \beta$ is again filling.} 
The 
matrix~$XX^\top$ for the multicurves~$\alpha$ and~$\beta$ takes the form
\[
XX^\top=
\left(
\begin{array}{c|ccc}
y^2 & \frac{a_1}{2} & \dots & \frac{a_n}{2} \\
\hline 
\frac{a_1}{2} &  &  &  \\
\vdots  &  & A &   \\
\frac{a_n}{2} &  &  & 
\end{array}
\right),
\]
 and~$\chi_{XX^\top}\in\mathbb{Z}[t,y]$ is irreducible by~\Cref{add_one_genus_lemma}.
By Hilbert's irreducibility theorem, there exist infinitely many specifications of~$y$ (and in particular 
infinitely many specifications of~$y$ such that~$y^2-a>0$) with~$\chi_{XX^\top}\in\mathbb{Z}[t]$ irreducible. 
{The degree of this polynomial equals the size of the matrix~$XX^\top$, which equals}~$$3(g-1)-1 + 1 = 3g-3.$$  
\smallskip

\section{Proof of~\Cref{thm:main:1}} 
\label{sec:non:orientable}

\noindent
The goal of this section is to realise every positive integer~$d \le 3g-3$ as the multicurve intersection 
degree of a pair of multicurves~$\alpha,\beta \subset S$ on~$S_g$ for~$g\ge 3$ in such a way that 
the multicurves~$\alpha$ and~$\beta$ consist of components 
that are separating or that come in bounding pairs, where for each bounding pair one 
of the curves is a component of~$\alpha$ and the other is a component of~$\beta$. 
{This guarantees that~$T_\alpha \circ T_\beta^{-1}$ is an element of the Torelli group~$\mathcal{I}(S_g)$.
Indeed, on the homology level, the action of a Dehn twist along a curve~$\gamma$ is given 
by~$[\delta] \mapsto [\delta] + i(\delta, \gamma)[\gamma]$, where~$i(\cdot,\cdot)$ is the algebraic 
intersection form on~${H}_1(S_g;\mathbb{Z})$. It follows that a Dehn twist along a separating curve 
is en element of~$\mathcal{I}(S_g)$, but also each composition of Dehn twists~$T_{\alpha_i} \circ T_{\beta_j}^{-1}$ 
for a bounding pair~$\alpha_i$ and~$\beta_j$. Since all the individual Dehn twists~$T_{\alpha_i}$ commute, 
as well as all the individual Dehn twists~$T_{\beta_j}$, it follows that if the multicurves~$\alpha$ and~$\beta$ consist of components 
that are separating or that come in bounding pairs, where for each bounding pair one 
of the curves is a component of~$\alpha$ and the other is a component of~$\beta$, 
the mapping class~$T_\alpha \circ T_\beta^{-1}$ is an element of~$\mathcal{I}(S_g)$,
and hence so is the mapping class~$T_\alpha^n \circ T_\beta^{-n}$.} 
\Cref{thm:criterion:nonsplitting} then provides~\Cref{thm:main:1} in the case~$g\ge 3$. 
For the case~$g=2$ we note that the statement is proved for~$d=3g-3 = 3$ in~\Cref{realisation_section},
and for~$d \le 2 = g$ it is proved in~\cite{LL}.
\smallskip

\noindent 
We start with the maximal degree~$3g-3$ and then discuss how to adapt the 
construction in order to realise smaller degrees.
\smallskip

\subsection{Multicurve intersection degree~$3g-3$}
\label{maximal_Torelli}
We start by realising, on the surface of genus~$g\ge 2$ with one boundary component, 
a filling pair of multicurves~$\alpha$ and~$\beta$ such that~$\chi_{XX^\top} \in \mathbb{Z}[t]$ 
is irreducible and of degree~$3g-2$, in such a way that 
the multicurves~$\alpha$ and~$\beta$ consist of components 
that are separating or that come in bounding pairs, where for each bounding pair one 
of the curves is a component of~$\alpha$ and the other is a component of~$\beta$. 
The construction is done by induction on the genus~$g\geq 2$.

\subsubsection{For~$g=2$ with one boundary component.}
We consider the two multicurves~$\alpha$ and~$\beta$ shown in 
\Cref{genustwo}, {with~$y$ parallel copies of the component~$\beta_1$}.
\begin{figure}[h]
\def\svgwidth{280pt}
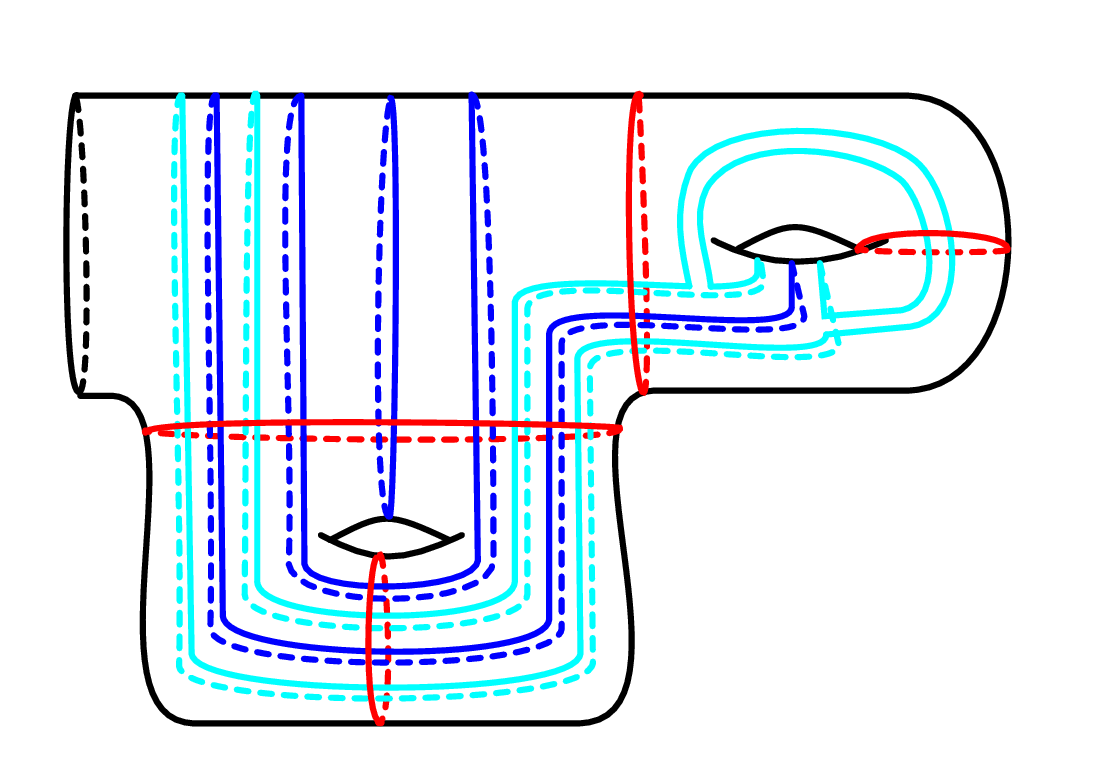
\caption{Two multicurves~$\alpha$ and~$\beta$ on the surface of genus two with 
one boundary component. {There are~$y$ parallel copies of the component~$\beta_1$, 
and the separating component~$\beta_3$ is drawn in light blue.}}
\label{genustwo}
\end{figure}
{We first note that the components~$\alpha_1,\beta_1,\alpha_3$ and~$\beta_3$ are separating. Furthermore,
the components~$\alpha_2$ and~$\alpha_4$ have their counterparts~$\beta_2$ and~$\beta_4$, respectively, 
in the multicurve~$\beta$ with which they each form a bounding pair.}
\smallskip

\noindent
We directly calculate 
\[ 
XX^\top = 
\begin{pmatrix}
84 + 16y & 40 + 8y & 40 & 16  \\
40 + 8y & 20 + 4y & 20 & 8 \\
40 & 20 & 20 & 8 \\
16 & 8 & 8 & 4
\end{pmatrix},
\]
and it is a direct check (by the computer) that the characteristic polynomial of~$XX^\top$ 
is irreducible if~$y=2$ or~$y=3$. This finishes the case~$g=2$ with one boundary 
component. 
\smallskip

\subsubsection{For~$g> 2$ and one boundary component} 
In order to increase the genus by one, we glue a surface of genus one with two boundary 
components as follows. On this surface, we consider the two multicurves~$\alpha$ 
and~$\beta$ shown in~\Cref{genusoneTorelli}. 
\begin{figure}[h]
\def\svgwidth{150pt}
%% Creator: Inkscape 1.1.1 (c3084ef, 2021-09-22), www.inkscape.org
%% PDF/EPS/PS + LaTeX output extension by Johan Engelen, 2010
%% Accompanies image file '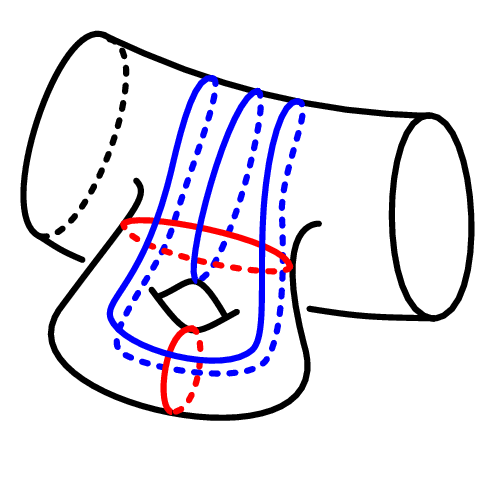' (pdf, eps, ps)
%%
%% To include the image in your LaTeX document, write
%%   \input{<filename>.pdf_tex}
%%  instead of
%%   \includegraphics{<filename>.pdf}
%% To scale the image, write
%%   \def\svgwidth{<desired width>}
%%   \input{<filename>.pdf_tex}
%%  instead of
%%   \includegraphics[width=<desired width>]{<filename>.pdf}
%%
%% Images with a different path to the parent latex file can
%% be accessed with the `import' package (which may need to be
%% installed) using
%%   \usepackage{import}
%% in the preamble, and then including the image with
%%   \import{<path to file>}{<filename>.pdf_tex}
%% Alternatively, one can specify
%%   \graphicspath{{<path to file>/}}
%% 
%% For more information, please see info/svg-inkscape on CTAN:
%%   http://tug.ctan.org/tex-archive/info/svg-inkscape
%%
\begingroup%
  \makeatletter%
  \providecommand\color[2][]{%
    \errmessage{(Inkscape) Color is used for the text in Inkscape, but the package 'color.sty' is not loaded}%
    \renewcommand\color[2][]{}%
  }%
  \providecommand\transparent[1]{%
    \errmessage{(Inkscape) Transparency is used (non-zero) for the text in Inkscape, but the package 'transparent.sty' is not loaded}%
    \renewcommand\transparent[1]{}%
  }%
  \providecommand\rotatebox[2]{#2}%
  \newcommand*\fsize{\dimexpr\f@size pt\relax}%
  \newcommand*\lineheight[1]{\fontsize{\fsize}{#1\fsize}\selectfont}%
  \ifx\svgwidth\undefined%
    \setlength{\unitlength}{241.86132048bp}%
    \ifx\svgscale\undefined%
      \relax%
    \else%
      \setlength{\unitlength}{\unitlength * \real{\svgscale}}%
    \fi%
  \else%
    \setlength{\unitlength}{\svgwidth}%
  \fi%
  \global\let\svgwidth\undefined%
  \global\let\svgscale\undefined%
  \makeatother%
  \begin{picture}(1,0.9516255)%
    \lineheight{1}%
    \setlength\tabcolsep{0pt}%
    \put(0,0){\includegraphics[width=\unitlength]{genusoneTorelli.eps}}%
    \put(0.64827827,0.42707547){\makebox(0,0)[lt]{\lineheight{1.25}\smash{\begin{tabular}[t]{l}$\alpha_1$\end{tabular}}}}%
    \put(0.30755527,0.05690556){\makebox(0,0)[lt]{\lineheight{1.25}\smash{\begin{tabular}[t]{l}$\alpha_2$\end{tabular}}}}%
    \put(0.62724389,0.799347){\makebox(0,0)[lt]{\lineheight{1.25}\smash{\begin{tabular}[t]{l}$y$ copies\end{tabular}}}}%
  \end{picture}%
\endgroup%

\caption{Two multicurves~$\alpha$ (in red) and~$\beta$ (in blue) on the surface of genus 
one with two boundary components. The multicurve~$\beta$ has~$y$ parallel copies 
of its separating component.}
\label{genusoneTorelli} 
\end{figure}
We directly calculate 
\[ 
XX^\top = 
\begin{pmatrix}
16y+4 & 8y \\
8y & 4y
\end{pmatrix} =: C_{y},
\]
and~$\chi_{XX^\top}(t) = t^2 -(20y+4)t+16y$ with discriminant~$16\cdot (25y^2+6y+1)$, 
which is never a square. Indeed, we have
\[
(5y)^2 = 25y^2 <  25y^2+6y+1 < 25y^2 +10y + 1 = (5y+1)^2.
\]
In particular, the polynomial~$\chi_{XX^\top}$ is irreducible for every positive integer~$y$.
\smallskip

\noindent
For the inductive step, let~$g \ge 2$. Assume we have constructed on the surface of 
genus~$g$ with one boundary component a pair of multicurves~$\alpha', \beta'$ 
such that the characteristic polynomial~$\chi_{XX^\top} \in \mathbb{Z}[t]$ is irreducible 
and of degree~$3g-2$, in such a way that 
the multicurves~$\alpha$ and~$\beta$ consist of components 
that are separating or that come in bounding pairs, where for each bounding pair one 
of the curves is a component of~$\alpha$ and the other is a component of~$\beta$. 
Further, assume that~$\alpha_1'$ is a simple closed curve that encircles all the handles 
of the surface, except for the rightmost one. Then, we take such a model surface and glue to 
its boundary a surface of genus one with two boundary components, as shown in 
\Cref{genusoneTorelli}, and add two new curves~$\alpha_0$ and~$\beta_0$ to the 
multicurves. The curve~$\alpha_0$ encircles all the handles of the newly formed surface, 
except for the rightmost one, and the curve~$\beta_0$ runs along the glued boundary 
components, and twice intersects~$\alpha_0$ but no other component of~$\alpha$, 
see~\Cref{torelli_induction}. 
\smallskip

\begin{figure}[h]
\def\svgwidth{300pt}
%% Creator: Inkscape 1.1.1 (c3084ef, 2021-09-22), www.inkscape.org
%% PDF/EPS/PS + LaTeX output extension by Johan Engelen, 2010
%% Accompanies image file '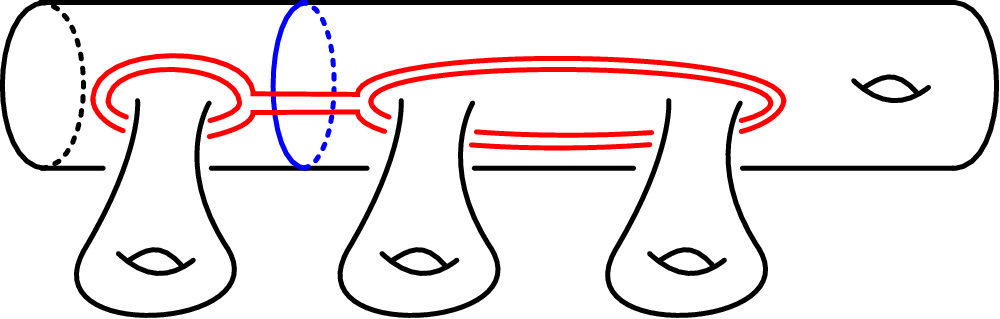' (pdf, eps, ps)
%%
%% To include the image in your LaTeX document, write
%%   \input{<filename>.pdf_tex}
%%  instead of
%%   \includegraphics{<filename>.pdf}
%% To scale the image, write
%%   \def\svgwidth{<desired width>}
%%   \input{<filename>.pdf_tex}
%%  instead of
%%   \includegraphics[width=<desired width>]{<filename>.pdf}
%%
%% Images with a different path to the parent latex file can
%% be accessed with the `import' package (which may need to be
%% installed) using
%%   \usepackage{import}
%% in the preamble, and then including the image with
%%   \import{<path to file>}{<filename>.pdf_tex}
%% Alternatively, one can specify
%%   \graphicspath{{<path to file>/}}
%% 
%% For more information, please see info/svg-inkscape on CTAN:
%%   http://tug.ctan.org/tex-archive/info/svg-inkscape
%%
\begingroup%
  \makeatletter%
  \providecommand\color[2][]{%
    \errmessage{(Inkscape) Color is used for the text in Inkscape, but the package 'color.sty' is not loaded}%
    \renewcommand\color[2][]{}%
  }%
  \providecommand\transparent[1]{%
    \errmessage{(Inkscape) Transparency is used (non-zero) for the text in Inkscape, but the package 'transparent.sty' is not loaded}%
    \renewcommand\transparent[1]{}%
  }%
  \providecommand\rotatebox[2]{#2}%
  \newcommand*\fsize{\dimexpr\f@size pt\relax}%
  \newcommand*\lineheight[1]{\fontsize{\fsize}{#1\fsize}\selectfont}%
  \ifx\svgwidth\undefined%
    \setlength{\unitlength}{479.51716683bp}%
    \ifx\svgscale\undefined%
      \relax%
    \else%
      \setlength{\unitlength}{\unitlength * \real{\svgscale}}%
    \fi%
  \else%
    \setlength{\unitlength}{\svgwidth}%
  \fi%
  \global\let\svgwidth\undefined%
  \global\let\svgscale\undefined%
  \makeatother%
  \begin{picture}(1,0.3178559)%
    \lineheight{1}%
    \setlength\tabcolsep{0pt}%
    \put(0,0){\includegraphics[width=\unitlength]{torelli_induction.eps}}%
    \put(0.51561118,0.21098752){\makebox(0,0)[lt]{\lineheight{1.25}\smash{\begin{tabular}[t]{l}$\cdots$\end{tabular}}}}%
    \put(0.58755856,0.20160308){\makebox(0,0)[lt]{\lineheight{1.25}\smash{\begin{tabular}[t]{l}$\alpha_1'$\end{tabular}}}}%
    \put(0.7251971,0.26103787){\makebox(0,0)[lt]{\lineheight{1.25}\smash{\begin{tabular}[t]{l}$\alpha_0$\end{tabular}}}}%
    \put(0.23720599,0.2766786){\makebox(0,0)[lt]{\lineheight{1.25}\smash{\begin{tabular}[t]{l}$\beta_0$\end{tabular}}}}%
    \put(0.15109441,0.21210001){\makebox(0,0)[lt]{\lineheight{1.25}\smash{\begin{tabular}[t]{l}$\alpha''_1$\end{tabular}}}}%
  \end{picture}%
\endgroup%

\caption{}
\label{torelli_induction}
\end{figure}

\noindent
The proof of irreducibility is now exactly the same as in the non-Torelli case in~\Cref{realisation_section}. 
The only thing we need to check is that the multicurves~$\alpha$ and~$\beta$ consist of components 
that are separating or that come in bounding pairs, where for each bounding pair one 
of the curves is a component of~$\alpha$ and the other is a component of~$\beta$. 
But this is clearly the case, since all the curves we add in the inductive step are 
separating or come as a bounding pair. 
\smallskip

\subsubsection{The closed case for~$g\ge4$.}
The last step is to make the surfaces closed. We simply glue together 
two pieces of genera~$g',g''$, where~$g'+g'' = g$, and one boundary component 
together along their boundaries. The same argument as in the inductive step provides 
irreducible characteristic polynomials of degree~$$3g' - 2 + 3g'' - 2 + 1 = 3g-3.$$ 

\subsubsection{The closed case for~$g=3$.}
We need a different argument. In this case, we start with the surface 
of genus two and one boundary component depicted in~\Cref{genustwo}, and close it off to the left by glueing 
a surface of genus one with one boundary component, see~\Cref{genus3}. 
\begin{figure}[h]
\def\svgwidth{400pt}
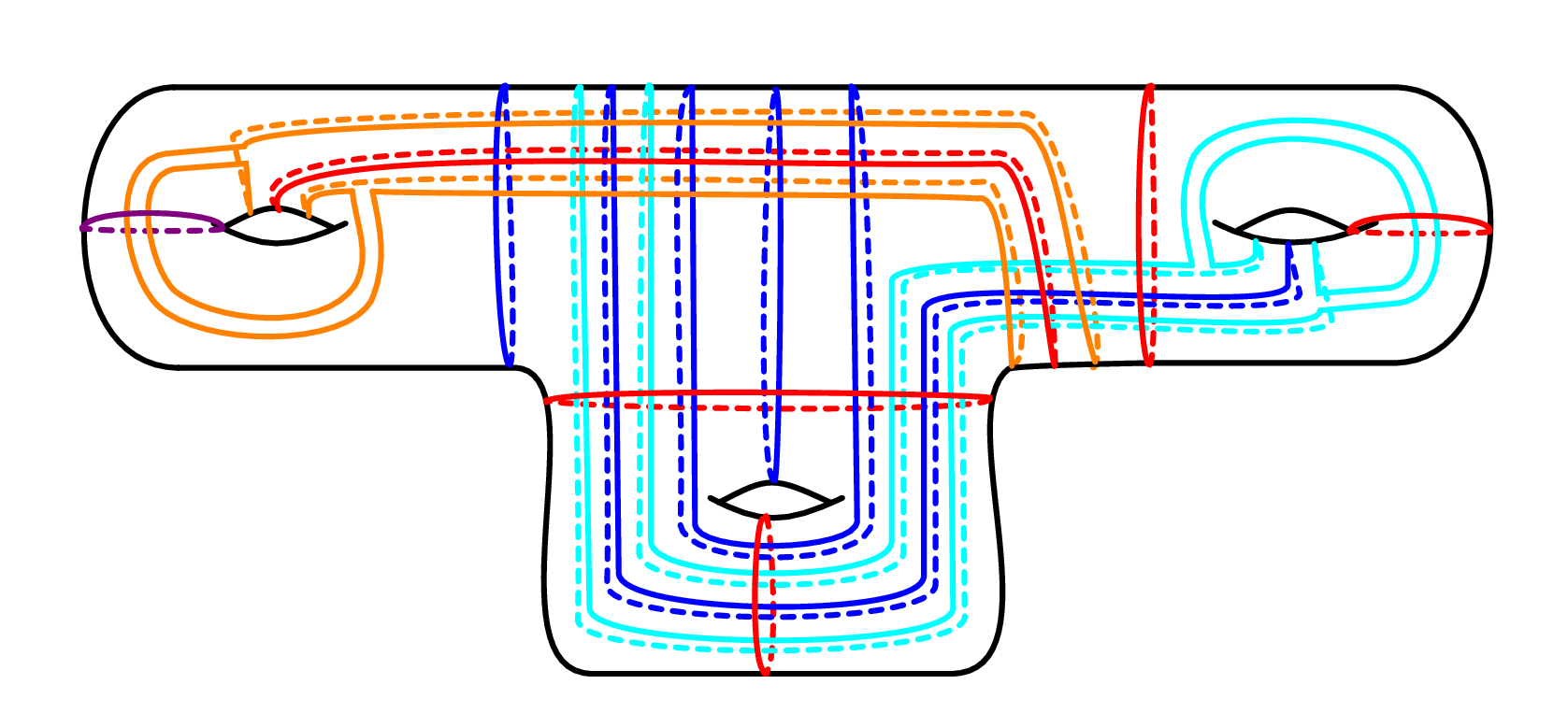
\caption{Two multicurves~$\alpha$ and~$\beta$ on the surface of genus three. 
There are to new components of~$\alpha$ when compared to~\Cref{genustwo}: 
a nonseparating component (red) that we call~$\alpha_5$ and a separating component
(orange) that we call~$\alpha_6$. Similarly, there are two new components of~$\beta$: 
a separating component (blue) that we call~$\beta_5$ and a nonseparating component 
({violet}) that we call~$\beta_6$.} 
\label{genus3}
\end{figure}
{We add the curves~$\alpha_5,\alpha_6$, two parallel copies of~$\beta_5$ and finally 
the curve~$\beta_6$. We note that~$\beta_5$ and~$\alpha_6$ are separating, and~$\beta_6$ and~$\alpha_5$ form a bounding pair.
To read off the matrix~$XX^\top$ for this pair of multicurves, we note 
two things. Firstly, the component~$\alpha_5$ intersects the curves~$\beta_j$ the same number of times as~$\alpha_1$, 
except for the component~$\beta_5$ of which there are two parallel copies. Secondly, the component~$\alpha_6$ 
intersects each curve~$\beta_j$ twice the number of times as~$\alpha_5$, except for~$\beta_6$. Using this, it is straightforward to calculate 
(ordering the curves as~$\alpha_6,\alpha_5,\alpha_1,\alpha_2,\alpha_3,\alpha_4$)
\[ 
XX^\top = 
\begin{pmatrix}
500 & 248 & 232 & 112 & 80 & 32\\
248 & 124 & 116 & 56 & 40 & 16 \\
232 & 116 & 116 & 56 & 40 & 16  \\
112 & 56 & 56 & 28 & 20 & 8 \\
80 & 40 & 40 & 20 & 20 & 8 \\
32 & 16 & 16 & 8 & 8 & 4
\end{pmatrix},
\]
which is then checked to have irreducible characteristic polynomial.}
\smallskip

\subsection{Multicurve intersection degrees~$d<3g-3$} We now show how to 
modify our construction from~\Cref{maximal_Torelli} in order to realise multicurve 
intersection degrees smaller than the maximal multicurve intersection degree~$3g-3$. 
We need new building blocks to construct our surfaces. 
\smallskip

\noindent
\emph{Block 1}. Our first block is obtained from the surface depicted in 
\Cref{genustwo}, simply by dropping the component~$\alpha_3$. 
A direct verification yields that for~$y=1,2$ the characteristic polynomial 
of~$XX^\top$ is irreducible and of degree 3.
\smallskip

\noindent
\emph{Block 2}. Our second block is obtained from the surface depicted in 
\Cref{D2}. The characteristic polynomial of the matrix~$XX^\top$ for the 
multicurves~$\alpha$ and~$\beta$ is irreducible and 
of degree 1. Versions of this block with distinct characteristic polynomial 
can be obtained by taking~$y$ parallel copies of~$\beta$. 
\begin{figure}[h]
\def\svgwidth{200pt}
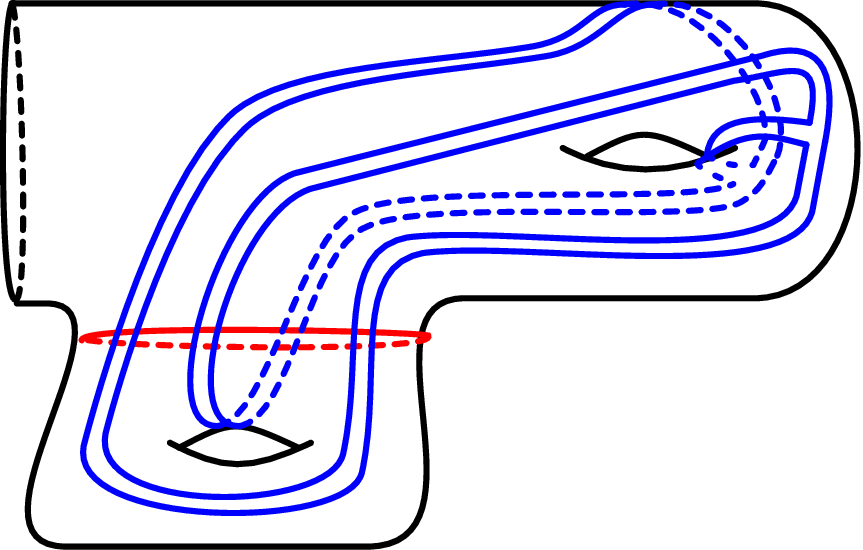
\caption{Two separating and filling {curves~$\alpha$ (red) and~$\beta$ (blue)} on the surface of genus two 
with one boundary component.}
\label{D2}
\end{figure}
\smallskip

\noindent
\emph{Block 3}. Take a surface as depicted in~\Cref{k_block_Torelli}.
We denote the red multicurve by~$\alpha$ and the blue multicurve by~$\beta$. 
The multicurve~$\alpha$ has~$k+1$ separating components: one for each of the handles that 
separates the handle, and one that separates all the handles.
We denote the component of~$\alpha$ that separates   
all the handles of the surface in~\Cref{k_block_Torelli} by~$\alpha_1$, and we denote the other separating 
components of~$\alpha$ {by~$\alpha_2, \alpha_4, \dots, \alpha_{2k}$} from left to right. 
Finally, the remaining nonseparating components of~$\alpha$ {are~$\alpha_3, \alpha_5, \dots, \alpha_{2k+1}$ } 
from left to right.
\begin{figure}[h]
\def\svgwidth{280pt}
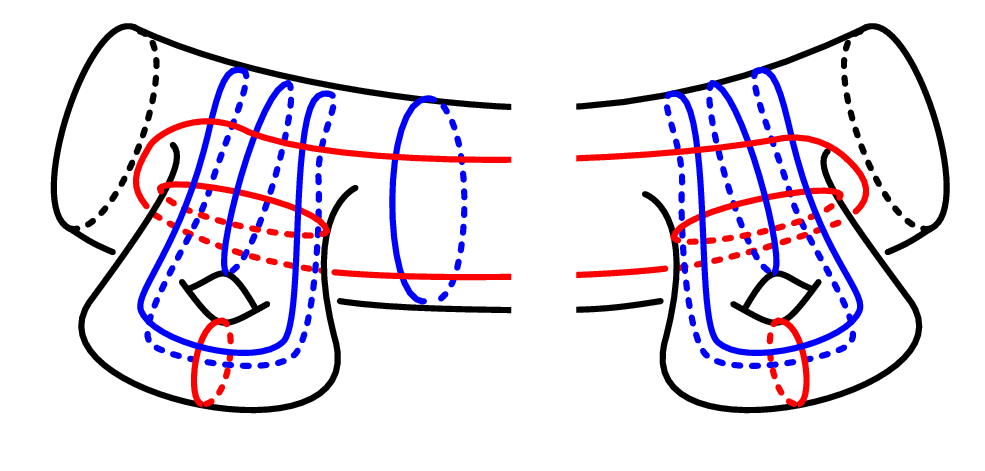
\caption{A surface of genus~$k$ with two boundary components, as well as  
two multicurves~$\alpha$ (in red) and~$\beta$ (in blue). 
The separating components of~$\beta$ can have several parallel copies: 
the {separating components~$\beta_1,\dots,\beta_k$ running through} the handles have~$y_1, \dots, y_k$ copies, and 
the separating {component~$\beta_0$ to the right of the left-most handle has~$y^2 - k - 4( y_1 + \cdots + y_k)$ copies.}}
\label{k_block_Torelli}
\end{figure}
{The multicurve~$\beta$ consists of a separating and a nonseparating component in each handle, 
as well as one separating curve~$\beta_0$ to the right of the left-most handle. All the separating components of~$\beta$ can have multiple parallel copies, 
see~\Cref{k_block_Torelli}.} 
In this situation, we have
\[ 
XX^\top =  \left(\begin{array}{c|ccccccc}
4y^2 & v_{y_1}^\top & v_{y_2}^\top & \cdots & v_{y_k}^\top \\
\hline
v_{y_1} & C_{y_1} & 0 & &  \\
v_{y_2} & 0 & C_{y_2} & &  \\
\vdots  & &  & \ddots &  \\
v_{y_k} & & & & C_{y_k} 
\end{array}\right),~C_{y_i} = \begin{pmatrix} 16y_i + 4& 8y_i \\ 8y_i & 4y_i \end{pmatrix},~v_{y_i} = 
\begin{pmatrix} 16y_i + 4 \\ 8y_i \end{pmatrix}.
\]

\noindent
If all the~$y_i$ are chosen pairwise distinct, ~\Cref{k_blocks_remark} guarantees that the polynomial~$\chi_{XX^\top}(t,y) \in \mathbb{Z}[t][y]$ 
is irreducible. By Hilbert's irreducibility theorem, there are infinitely many specifications 
of~$y$ such {that~$y^2 - k -4( y_1 + \cdots +y_k)>0$}
and such that~$\chi_{XX^\top}(t) \in \mathbb{Z}[t]$ 
is irreducible and of degree~$2k+1$.
Note that we can drop the separating components {$\alpha_2, \alpha_4, \dots, \alpha_{2k}$} of~$\alpha$ 
winding around one handle one by one in order to decrease the degree, reducing 
a 2-by-2 block to a 1-by-1 block, consisting of the coefficient~$4y_i$, for each component dropped in this way. 
Again, if all the~$y_i$ are chosen pairwise distinct,~\Cref{k_blocks_remark} 
guarantees that the polynomial~$\chi_{XX^\top}(t,y) \in \mathbb{Z}[t][y]$ is irreducible. 
 We can in this way construct all degrees~$k+1 \le d \le 2k+1$ for the 
surface of genus~$k$ and~2 boundary components. 

\subsubsection{Realising multicurve intersection degrees~$3g-6 \le d <3g-3$.}
Using a block of type 1 or 2 instead of our standard starting surface depicted in 
\Cref{genustwo},  we can reduce the multicurve intersection degree by 1 or 3, 
respectively. Since we use such a block on both sides of the surface in our 
construction, this gives the possibility to reduce 
the degree by any among the numbers 1,2,3,4 or 6. In particular, we can 
clearly realise the multicurve intersection {degrees~$3g-4,3g-5$ and~$3g-6$.}
This argument works for~$g\ge 4$. 
\smallskip

\noindent
In case of~$g=3$, we need a separate argument. The idea is to copy our example 
of maximal degree from~\Cref{genus3}, but leave out {first~$\alpha_1$ and then 
also~$\alpha_3$. 
If we drop~$\alpha_1$, which amounts do deleting the third row and column, we obtain a new matrix~$XX^\top$
which is directly verified to have an irreducible characteristic polynomial. If we now drop also~$\alpha_3$, 
which amounts to deleting the second-last row and column, we obtain a new matrix~$XX^\top$
which again is verified to have irreducible characteristic polynomial. Since we have only dropped separating components, 
we have not changed the fact that the multicurves~$\alpha$ and~$\beta$ consist of components 
that are separating or that come in bounding pairs, where for each bounding pair one 
of the curves is a component of~$\alpha$ and the other is a component of~$\beta$.} 
\smallskip

\noindent
We have therefore realised~$d=5,4$ for~$g=3$. For~$g=3$, the case~$d=3$ equals the case~$d=3g-6$, 
which is treated below.

\subsubsection{Realising multicurve intersection degrees~$g \le d \le 3g-6$.}
We start by constructing a surface of genus~$g-2$ with two boundary 
components, which we then close off in a second step.
\smallskip

\noindent
Using surfaces of the type depicted in~\Cref{genusoneTorelli} and 
applying the inductive step procedure illustrated in~\Cref{power2induction}, we can construct a surface of genus~$g-2\ge1 $ 
and two boundary components, as well as filling multicurves~$\alpha$ and~$\beta$ 
with intersection degree~$3(g-2) - 1 = 3g-7$. {The count of this intersection degree is exactly as in the 
non-Torelli case of~\Cref{realisation_section}, where we obtained degree~$3g-1$ 
on the closed surface of genus~$g$ with 2 boundary components}.
Using at some point in the inductive procedure a  block of type 3 of genus~$k \le g-2$, 
as depicted in~\Cref{k_block_Torelli},
we can reduce the degree by up to~$2k-2 \le 2g-6$, realising multicurve intersection degrees 
from~$g-1$ to~$3g-7$ on the surface of genus~$g-2$ with two boundary components. 
Now we close the surface, as depicted in~\Cref{closedTorelli}, adding the new 
components~$\alpha_0$ and~$\beta_0$ to the multicurves~$\alpha$ and~$\beta$, respectively.

\begin{figure}[h]
\def\svgwidth{400pt}
%% Creator: Inkscape 1.1.1 (c3084ef, 2021-09-22), www.inkscape.org
%% PDF/EPS/PS + LaTeX output extension by Johan Engelen, 2010
%% Accompanies image file '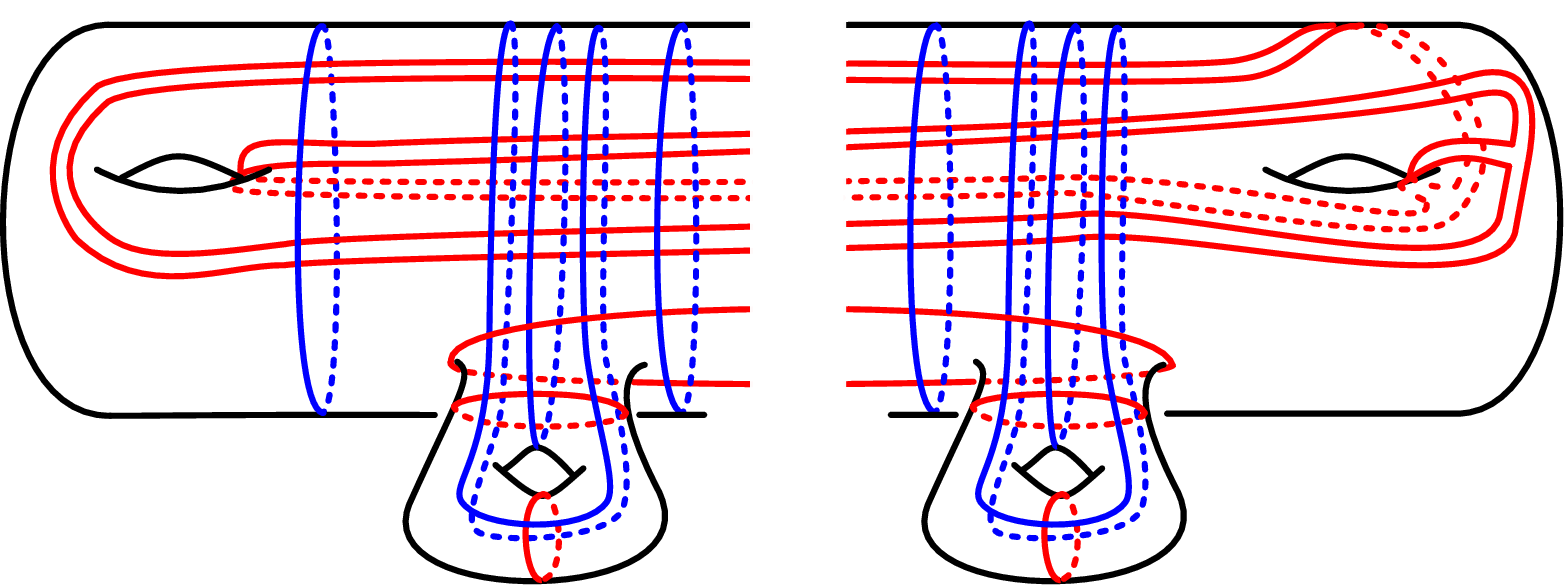' (pdf, eps, ps)
%%
%% To include the image in your LaTeX document, write
%%   \input{<filename>.pdf_tex}
%%  instead of
%%   \includegraphics{<filename>.pdf}
%% To scale the image, write
%%   \def\svgwidth{<desired width>}
%%   \input{<filename>.pdf_tex}
%%  instead of
%%   \includegraphics[width=<desired width>]{<filename>.pdf}
%%
%% Images with a different path to the parent latex file can
%% be accessed with the `import' package (which may need to be
%% installed) using
%%   \usepackage{import}
%% in the preamble, and then including the image with
%%   \import{<path to file>}{<filename>.pdf_tex}
%% Alternatively, one can specify
%%   \graphicspath{{<path to file>/}}
%% 
%% For more information, please see info/svg-inkscape on CTAN:
%%   http://tug.ctan.org/tex-archive/info/svg-inkscape
%%
\begingroup%
  \makeatletter%
  \providecommand\color[2][]{%
    \errmessage{(Inkscape) Color is used for the text in Inkscape, but the package 'color.sty' is not loaded}%
    \renewcommand\color[2][]{}%
  }%
  \providecommand\transparent[1]{%
    \errmessage{(Inkscape) Transparency is used (non-zero) for the text in Inkscape, but the package 'transparent.sty' is not loaded}%
    \renewcommand\transparent[1]{}%
  }%
  \providecommand\rotatebox[2]{#2}%
  \newcommand*\fsize{\dimexpr\f@size pt\relax}%
  \newcommand*\lineheight[1]{\fontsize{\fsize}{#1\fsize}\selectfont}%
  \ifx\svgwidth\undefined%
    \setlength{\unitlength}{750.34232701bp}%
    \ifx\svgscale\undefined%
      \relax%
    \else%
      \setlength{\unitlength}{\unitlength * \real{\svgscale}}%
    \fi%
  \else%
    \setlength{\unitlength}{\svgwidth}%
  \fi%
  \global\let\svgwidth\undefined%
  \global\let\svgscale\undefined%
  \makeatother%
  \begin{picture}(1,0.37397243)%
    \lineheight{1}%
    \setlength\tabcolsep{0pt}%
    \put(0,0){\includegraphics[width=\unitlength]{closedTorelli.eps}}%
    \put(0.49546459,0.04662666){\makebox(0,0)[lt]{\lineheight{1.25}\smash{\begin{tabular}[t]{l}$\cdots$\end{tabular}}}}%
    \put(0.15838798,0.13302278){\makebox(0,0)[lt]{\lineheight{1.25}\smash{\begin{tabular}[t]{l}$\beta_0$\end{tabular}}}}%
    \put(0.02970062,0.19365721){\makebox(0,0)[lt]{\lineheight{1.25}\smash{\begin{tabular}[t]{l}$\alpha_0$\end{tabular}}}}%
  \end{picture}%
\endgroup%

\caption{Two separating curves~$\alpha_0$ and~$\beta_0$. There are~$\rho$ parallel copies of~$\beta_0$.}
\label{closedTorelli}
\end{figure}

\noindent
We obtain the matrix
\[
XX^\top=
\left(
\begin{array}{c|ccc}
64\rho + 16a_1 & 4a_1 & \dots & 4a_n \\
\hline 
4a_1 &  &  &  \\
\vdots  &  & A &   \\
4a_n &  &  & 
\end{array}
\right), 
\]
where~$A$ is the matrix~$XX^\top$ before adding the curves~$\alpha_0$ and~$\beta_0$. 
Since~$a_1 = 4a$, we can set~$\rho = y^2-a$ to have the top-left coefficient~$64y^2$, which is 
exactly the form of the matrix in~\Cref{add_one_genus_lemma}. Finishing the argument 
as usual, we can realise the multicurve intersection degrees~$g\le d \le 3g-6$ for~$g\ge 3$. 

\subsubsection{Realising multicurve intersection degrees~$1 \le d <g$.}
Realising multicurve intersection degree one is clearly achieved by taking a pair 
of separating filling curves on the surface~$S_g$. 
\smallskip

\noindent
For~$2\le d< g$, let us define~$f=g-1-d$.  
We start with a surface block of type~3 of genus~$g-2$, where we deleted all the 
components {$\alpha_2, \alpha_4, \dots, \alpha_{2k}$} of~$\alpha$. 
We also remove the component~{$\beta_0$}  of~$\beta$, 
see~\Cref{k_block_Torelli}.  Furthermore, we let the~$f + 1\le g-2$ first
of the parameters~$y_i$ be equal to~$1$. Then we close off the surface as in the previous 
case, adding one component~$\alpha_0$ to~$\alpha$ and one component~$\beta_0$ to~$\beta$, 
compare with~\Cref{closedTorelli}. Assume 
there are~$\rho$ parallel copies of~$\beta_0$. 
We get
\[
XX^\top =  \begin{pmatrix}
64(\rho-g+2) + 256\delta & 32y_1 & 32y_2 & \cdots & 32y_{g-2} \\
32y_1 & 4y_1 & &&  \\
32y_2 &  & 4y_2 & &  \\
\vdots  & &  & \ddots &   \\
32y_{g-2} & & & & 4y_{g-2} 
\end{pmatrix}, 
\]
where~$\delta = y_1+ \cdots + y_{g-2}$.
We choose~$\rho$ such that~$64(\rho-g+2) + 256\delta = 64y^2$. 
To simplify the calculations, we let~$z_i = 4y_i$ for~$i=1,\dots,g-2$.
The matrix becomes
\[
XX^\top =  \begin{pmatrix}
64y^2 & 8z_1 & 8z_2 & \cdots & 8z_{g-2}  \\
8z_1 & z_1 & &&  \\
8z_2 &  & z_2 & &  \\
\vdots  & &  & \ddots &   \\
8z_{g-2} & & & & z_{g-2} 
\end{pmatrix}.
\]
By Lemma~9 in~\cite{LL}, the characteristic polynomial of~$XX^\top$ equals
\[
p(t,y,\mathbf{z})=
-64y^2\prod_{i=1}^{g-2}(t-z_i) + t\prod_{i=1}^{g-2}(t-z_i) - \sum_{i=1}^{g-2} 64z_i^2\prod_{j\ne i}(t-z_j).
\]
If all the~$z_i$ are pairwise distinct, this polynomial is irreducible as a polynomial in~$t,y$ 
by Lemma~10 in~\cite{LL}. However, we chose that the first~$f+1$ coefficients~$y_1, \dots, y_{f+1}$ 
are equal to~$1$ and the other~$y_i \ne 1$ and pairwise distinct. 
In particular, the polynomial~$p(t,y)$ factors as~$(t-4)^{f}\tilde{p}(t,y)$, where~$\tilde{p}(t,y)$ is of degree~$g-1-f =d$ 
in the variable~$t$ and with pairwise distinct~$z_i$. In particular, Lemma~10 in~\cite{LL} 
implies that~$\tilde{p}(t,y) \in \mathbb{Z}[t,y]$ is irreducible. Hilbert's irreducibility theorem 
guarantees the existence of infinitely many positive specifications of~$y$ such that the resulting 
polynomial is irreducible in~$\mathbb{Z}[t]$. 
\smallskip

\noindent
This finishes the proof of~\Cref{thm:main:1}. 
\smallskip

\noindent
Finally, we end this section with a proof of~\Cref{multicurvedegreethm:bipartite}.

\begin{proof}[Proof of~\Cref{multicurvedegreethm:bipartite}]
For every~$g\ge 3$ and every integer~$1 \le d\leq 3g-3$, we have constructed a filling pair of 
multicurves~$\alpha$ and~$\beta$, with a parameter~$y$, such that~$\chi_{XX^\top}(t,y)\in\mathbb{Z}[t,y]$ 
is irreducible. By~\Cref{rk:root}, we may run the same argument to show that also the 
polynomial~$\chi_{XX^\top}(t^2,y)\in\mathbb{Z}[t,y]$ is irreducible. By Hilbert's irreducibility 
theorem, we find infinitely many specifications of~$y$ such that~$\chi_{XX^\top}(t^2)\in\mathbb{Z}[t]$ 
is irreducible of degree~$2d$. 
The leading eigenvalue~$\mu$ of $XX^\top$ is a root of a characteristic polynomial~$\chi_{XX^\top}(t)$, 
so~$\chi_{XX^\top}(t^2)\in\mathbb{Z}[t]$ is the minimal polynomial of~$\sqrt{\mu}$. Hence, 
the multicurve bipartite degree of~$\alpha$ and~$\beta$ equals~$2d$. 
\smallskip

\noindent
For~$g=2$, we use the example constructed in~\Cref{realisation_section} for~$d=3$ and the examples 
in~\cite{LL} for~$d=1,2$. Similarly to~\Cref{rk:root}, one can run the same proof 
as~\cite[Lemma 10]{LL} to show that~$\chi_{XX^\top}(t^2,y)\in\mathbb{Z}[t,y]$ is irreducible.
\end{proof}

\section{Explicit pseudo-Anosov maps}
\label{sec:proof}

\noindent
{There are two goals of this section, both inspired by a problem posed by Margalit~\cite[Problem 10.4]{Margalit:question}.
This section is completely independent of the proof of our main results, \Cref{thm:trace}, \Cref{thm:stretch} and~\Cref{thm:main:1}}. 
\smallskip

\noindent
{The first goal is to construct, as explicitly as possible, pseudo-Anosov maps 
whose stretch factors have prescribed algebraic degrees. For suitably chosen examples, we can 
replace our nonsplitting criterion,~\Cref{thm:criterion:nonsplitting}, by the nonsplitting criterion of~\cite[Theorem~6]{LL}. 
This provides that the mapping class~$T_\alpha \circ T_\beta$ has degree two over the trace field, which 
is more explicit than what we obtain using~\Cref{thm:criterion:nonsplitting}. However, we are still not able to 
control the exact number of parallel copies used for some of the components~$\beta_i$. 
This construction is carried out in~\Cref{sec:smalldegrees} and~\Cref{sec:even_deg_almost_explicit}.}
\smallskip

\noindent
{The second goal is to construct completely explicit pseudo-Anosov mapping classes 
of the form~$T_\alpha \circ T_\beta$ with maximal stretch factor degree~$6g-6$. 
We are able to achieve this for~$g\le 201$ in~\Cref{sec:completely_explicit}.}

\noindent
\subsection{Multicurve intersection degrees~$d \le 3g-3$}
\label{sec:smalldegrees}
{Recall that in~\Cref{realisation_section}, we constructed pairs of multicurves for which the 
Thurston--Veech construction realises the multicurve intersection degree~$3g-3$, for~$g\ge 2$. }
In a first step, we build upon our examples from~\Cref{realisation_section} to realise also all possible smaller multicurve intersection
degrees~$1 \le d <3g-3$ on~$S_g$ for~$g\ge 2$. 
\smallskip

\noindent
We need a new building block for our surfaces, which is depicted in~\Cref{k_block}. 
\smallskip

\begin{figure}[h]
\def\svgwidth{220pt}
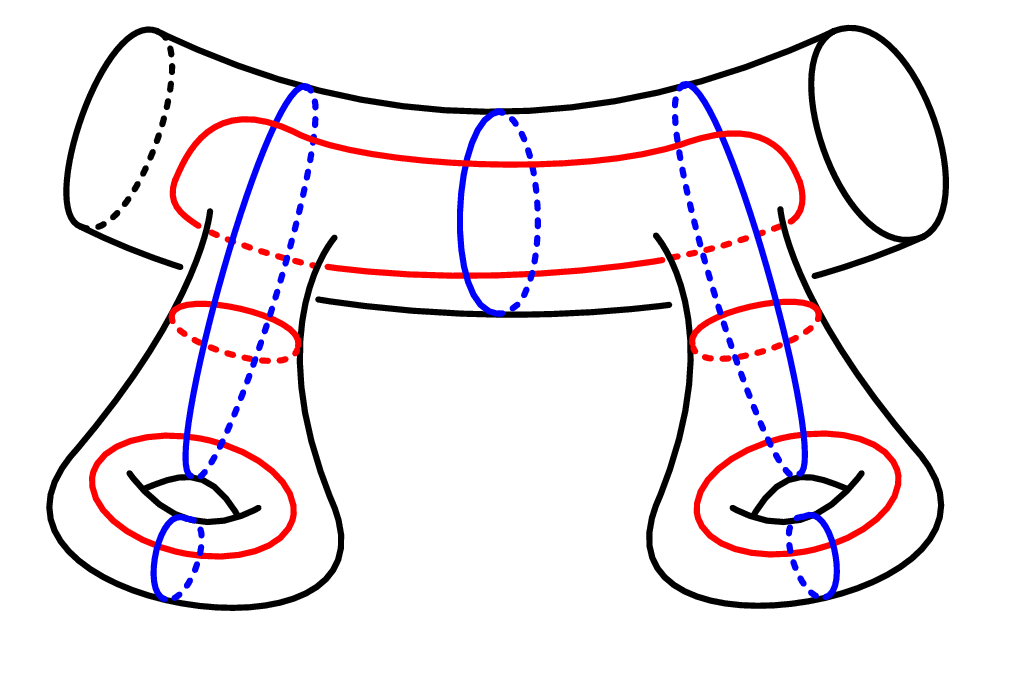
\caption{A surface of genus~$k$ with two boundary components, as well as  
two multicurves~$\alpha$ (in red) and~$\beta$ (in blue). 
Some components of~$\beta$ have several parallel copies, as indicated by~$y_1,\dots,y_k$ and~$y^2-k$.}
\label{k_block}
\end{figure}

\noindent
We denote the red multicurve by~$\alpha$ and the blue multicurve by~$\beta$. 
The multicurve~$\alpha$ has~$k+1$ separating components: one for each of the handles that 
separates the handle, and one that separates all the handles. We denote the component of~$\alpha$ that separates   
all the handles of the surface in~\Cref{k_block} by~$\alpha_1$, and we denote the other separating 
components of~$\alpha$ {by~$\alpha_2, \alpha_4, \dots, \alpha_{2k}$} from left to right. 
Finally, the remaining nonseparating components of~$\alpha$ {are~$\alpha_3, \alpha_5, \dots, \alpha_{2k+1}$ }
from left to right.
\smallskip

\noindent
In this situation, we have 

\[
XX^\top =  \left(\begin{array}{c|ccccccc}
4y^2 & v^\top & v^\top & \cdots & v^\top \\
\hline
v & B_{y_1} & 0 & &  \\
v & 0 & B_{y_2} & &  \\
\vdots  & &  & \ddots &  \\
v & & & & B_{y_k} 
\end{array}\right), \qquad B_{y_i} = \begin{pmatrix} 4 & 2 \\ 2 & y_i \end{pmatrix}, \qquad v = \begin{pmatrix} 4 \\ 2 \end{pmatrix}.
\]

\noindent
Let~$p_{y_i}(t) = t^2 - (4+y_i)t + 4(y_i-1)$ be the characteristic polynomial 
of~$B_{y_i}$. We know from~\Cref{g1_section} 
that~$p_{y_i}$ is irreducible if~$y\ge 12$. 
So, choosing all~$y_i\ge 12$ pairwise distinct,~\Cref{k_blocks_remark} 
guarantees that the polynomial~$\chi_{XX^\top}(t,y) \in \mathbb{Z}[t][y]$ is irreducible. By Hilbert's irreducibility 
theorem, there are infinitely many specifications of~$y$ such that~$y^2-k>0$ and 
such that~$\chi_{XX^\top}(t) \in \mathbb{Z}[t]$ is irreducible and of degree~$2k+1$.
\smallskip

\noindent
\emph{Case 1: $2g \le d< 3g-3$}. Assume we want to realise the multicurve intersection 
degree~$3g-3-f$ for~$0<f\le g-3$. Let~$k=f+2 \le g-1$. Start the inductive procedure as in 
\Cref{inductive_step} with the surface from~\Cref{k_block} as a starting point, 
adding~$g-1-k$ more handles. The exact same argument yields a surface of genus~$g-1$ 
with two boundary components, and a characteristic polynomial~$\chi_{XX^\top}\in\mathbb{Z}[t]$ 
that is irreducible and of degree~$2k+1 + 3(g-1-k) = 3g-3-k+1$. Closing up the surface exactly as in 
\Cref{closedcase} yields~$3g-3-k+2 = 3g-3-f$ as a multicurve intersection degree
on the closed orientable surface of genus~$g$. 
\smallskip

\noindent
\emph{Case 2: $g<d<2g$.} Assume we want to realise the multicurve intersection 
degree~$2g-f$ for~$0<f\le g-1$. Take the surface depicted in~\Cref{k_block} 
for~$k=g-1$. Now, remove~$f$ of the separating curve~$\alpha_2, \dots, \alpha_{2g-2}$. 
This slightly modifies the matrix~$XX^\top$:~$f$ of the 2-by-2 blocks on the diagonal 
are now 1-by-1 blocks, with the single coefficient~$y_i$. Nevertheless, since all the~$y_i$ 
are chosen pairwise distinct,~\Cref{k_blocks_remark} 
guarantees that the polynomial~$\chi_{XX^\top}(t,y) \in \mathbb{Z}[t][y]$ is irreducible. 
We note that for the coefficients~$y_i$ in the 1-by-1 blocks, any positive integer can be chosen. 
By Hilbert's irreducibility 
theorem, there are infinitely many specifications of~$y$ such that~$\chi_{XX^\top}(t) \in \mathbb{Z}[t]$ 
is irreducible and of degree~$2g-1-f$. Closing up the surface as in~\Cref{closedcase} yields 
the multicurve intersection degree~$2g-f$ on the closed orientable surface of genus~$g$. 
\smallskip

\noindent
\emph{Case 3: $1\le d \le g$}. This is the case we have already dealt with in~\cite{LL}.

\subsection{Even degree stretch factors}
\label{sec:even_deg_almost_explicit}

\noindent
In this section, we show that in our construction of multicurves in~\Cref{sec:smalldegrees}, 
the degree of the stretch factor of~$T_\alpha \circ T_\beta$ equals two over the trace field. 
It uses the nonsplitting criterion of~\cite[Theorem~6]{LL} that we recall below.

\begin{thm}[\cite{LL}, Theorem~6]
\label{thm:criterion}
Let~$\alpha,\beta \subset S$ be a pair of filling multicurves. Let~$X$ be their 
geometric intersection matrix, let~$d$ be their multicurve intersection degree and 
let~$\Omega = \left(\begin{smallmatrix}0 & X \\ X^\top & 0\end{smallmatrix}\right)$. 
If~$\dim(\Omega)>\sigma(\Omega + 2I) + \mathrm{null}(\Omega + 2I) >\dim(\Omega)-2d$, 
then the mapping class~$T_\alpha \circ T_\beta$ is pseudo-Anosov with stretch 
factor~$\lambda$ of degree~$2d$.
\end{thm}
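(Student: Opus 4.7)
The plan is to use the Thurston-Veech representation (as in the proof of~\Cref{thm:criterion:nonsplitting}) to reduce the statement to a norm computation over~$\mathbb{Q}(\mu)$, and then extract the required non-squareness from the signature data of~$\Omega + 2I$.

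First I would produce the representation~$\rho : \langle T_\alpha, T_\beta \rangle \to \mathrm{PSL}_2(\mathbb R)$ with~$\rho(T_\alpha) = \left(\begin{smallmatrix} 1 & r \\ 0 & 1 \end{smallmatrix}\right)$ and~$\rho(T_\beta) = \left(\begin{smallmatrix} 1 & 0 \\ -r & 1 \end{smallmatrix}\right)$ for~$r = \sqrt{\mu}$, where~$\mu$ is the Perron-Frobenius eigenvalue of~$XX^\top$. A direct trace computation yields~$\lambda + \lambda^{-1} = |\mu - 2|$, so~$\mathbb{Q}(\lambda + \lambda^{-1}) = \mathbb{Q}(\mu)$ is a field of degree~$d$ over~$\mathbb{Q}$ by definition of the multicurve intersection degree. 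Pseudo-Anosov-ness of~$T_\alpha \circ T_\beta$ follows once one verifies~$\mu > 4$, which is forced by the upper bound in the signature hypothesis (otherwise~$\Omega + 2I$ would be positive semidefinite and~$\sigma(\Omega + 2I) + \mathrm{null}(\Omega + 2I)$ would equal~$\dim(\Omega)$).

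Second, since~$\lambda$ satisfies~$t^2 - (\mu - 2) t + 1 = 0$ over~$\mathbb{Q}(\mu)$, I need to show that the discriminant~$\mu(\mu - 4)$ is not a square in~$\mathbb{Q}(\mu)$. I would attempt this by computing the norm
\[
N_{\mathbb{Q}(\mu)/\mathbb{Q}}\bigl(\mu(\mu - 4)\bigr) = \prod_i \mu_i(\mu_i - 4),
\]
where the~$\mu_i$ range over the Galois conjugates of~$\mu$, equivalently the eigenvalues of~$XX^\top$ in the same Galois orbit as the Perron-Frobenius eigenvalue. Since~$XX^\top$ is positive semidefinite, each~$\mu_i \geq 0$, so the sign of each factor is determined by whether~$\mu_i < 4$ or~$\mu_i > 4$, equivalently by whether the corresponding eigenvalues~$\pm\sqrt{\mu_i}$ of~$\Omega$ lie inside or outside the interval~$(-2, 2)$.

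Third, I would use the signature hypothesis to force the norm to be a negative integer, hence not a square in~$\mathbb{Z}$, hence~$\mu(\mu-4)$ not a square in~$\mathbb{Q}(\mu)$. The quantity~$\sigma(\Omega + 2I) + \mathrm{null}(\Omega + 2I)$ counts eigenvalues of~$\Omega$ that are at least~$-2$; the two strict inequalities in the hypothesis sandwich this count so that precisely an odd number of Galois conjugates~$\mu_i$ lie in the interval~$(0, 4)$, producing a negative norm. The hard part will be the sign/parity bookkeeping: I must exploit the symmetry~$\mathrm{Spec}(\Omega) = -\mathrm{Spec}(\Omega)$, distinguish genuine Galois conjugates of~$\mu$ from unrelated eigenvalues of~$XX^\top$, and verify that the specific bracket in the hypothesis is precisely the one guaranteeing a single Galois orbit's contribution forces the sign of the norm. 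Once this parity is pinned down, non-squareness of~$\mu(\mu - 4)$ in~$\mathbb{Q}(\mu)$ follows immediately, giving~$[\mathbb{Q}(\lambda) : \mathbb{Q}] = 2d$ as desired.
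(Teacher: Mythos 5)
Your setup is fine: the Thurston--Veech representation gives~$\mathrm{tr}(\rho(T_\alpha T_\beta))=2-\mu$, pseudo-Anosov-ness is equivalent to~$\mu>4$, and the first inequality of the hypothesis (equivalently, $\Omega+2I$ not positive semidefinite, i.e.\ some eigenvalue of~$XX^\top$ exceeds~$4$) does force this. The correct translation of the full hypothesis is that the number of eigenvalues of~$XX^\top$ (with multiplicity) that are greater than~$4$ is strictly between~$0$ and~$d$, since~$\sigma(\Omega+2I)+\mathrm{null}(\Omega+2I)=\dim(\Omega)-2\#\{\text{eigenvalues of }\Omega<-2\}$ and the eigenvalues of~$\Omega$ below~$-2$ are exactly the~$-\sqrt{\mu_i}$ with~$\mu_i>4$. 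It remains, as you say, to show~$\mu(\mu-4)$ is not a square in~$\mathbb{Q}(\mu)$.

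The gap is in your third step: the hypothesis does \emph{not} pin down the parity of the number of Galois conjugates of~$\mu$ lying in~$(0,4)$, so the norm~$N_{\mathbb{Q}(\mu)/\mathbb{Q}}(\mu(\mu-4))$ need not be negative. For instance, with~$d=4$ and conjugates~$\mu,\mu_2>4$ and~$\mu_3,\mu_4\in(0,4)$, the hypothesis~$0<\#\{\mu_i>4\}<d$ is satisfied, yet the norm is a product of two positive and two negative factors, hence positive, and your argument concludes nothing. What the hypothesis does give is weaker but sufficient for a different argument: since~$\mu$ has exactly~$d$ distinct conjugates, all of them eigenvalues of~$XX^\top$, and strictly fewer than~$d$ eigenvalues exceed~$4$, at least one conjugate~$\mu_i$ satisfies~$\mu_i\le 4$; it cannot equal~$0$ or~$4$, since otherwise the nonzero algebraic number~$\mu(\mu-4)$ would have a vanishing conjugate. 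Now use a \emph{single} embedding rather than the norm: the field~$\mathbb{Q}(\mu)$ is totally real (all conjugates of~$\mu$ are eigenvalues of the real symmetric matrix~$XX^\top$), so if~$\mu(\mu-4)=x^2$ with~$x\in\mathbb{Q}(\mu)$, then applying the real embedding sending~$\mu\mapsto\mu_i\in(0,4)$ gives~$\mu_i(\mu_i-4)<0$ equal to the square of a real number, a contradiction. This one-embedding positivity argument is the proof of~\cite[Theorem~6]{LL} that the paper invokes (the theorem is quoted there, not reproved); replacing your norm computation by it closes the gap.
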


\noindent
Here,~$\sigma(A)$ and~$\mathrm{null}(A)$ denote the signature and the nullity,
respectively, of the matrix~$A$. 

\begin{thm}
\label{sf_degrees}
Let~$\alpha$ and~$\beta$ be an example of a pair of multicurves described in~\Cref{sec:smalldegrees}, 
realising a multicurve intersection degree~$1\le d\le 3g-3$.  
Then the mapping class~$T_\alpha \circ T_\beta$ is pseudo-Anosov with stretch factor~$\lambda$ of degree $2d$.
\end{thm}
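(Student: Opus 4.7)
The plan is to apply the nonsplitting criterion \Cref{thm:criterion}. The first step is to rewrite its hypothesis purely in terms of the spectrum of $XX^\top$. Since the nonzero eigenvalues of $\Omega$ come in pairs $\pm\sqrt{\mu_i}$ (the signed singular values of $X$), the eigenvalues of $\Omega+2I$ that are strictly negative correspond exactly to those of the form $-\sqrt{\mu_i}$ with $\mu_i > 4$. The routine identity $\dim(\Omega) - \sigma(\Omega+2I) - \mathrm{null}(\Omega+2I) = 2\cdot\#\{\text{negative eigenvalues of }\Omega+2I\}$ then converts the inequalities of \Cref{thm:criterion} into
\[
0 \;<\; \#\{i : \mu_i > 4\} \;<\; d,
\]
where the $\mu_i$ range over the nonzero eigenvalues of $XX^\top$.

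In each construction of \Cref{sec:smalldegrees}, the characteristic polynomial $\chi_{XX^\top}$ is irreducible of degree $d$, so $XX^\top$ is a $d\times d$ symmetric matrix whose $d$ eigenvalues are precisely the Galois conjugates of the Perron eigenvalue $\mu$. The lower bound follows immediately from the Rayleigh quotient applied to the first basis vector: $\mu \geq (XX^\top)_{11}$, which in every construction equals $4y^2$ (or $y^2$ after the closing step) and hence exceeds $4$ for $y$ sufficiently large. For the upper bound I plan to invoke Cauchy interlacing with a suitable $2\times 2$ principal submatrix of the form $B_{y_i} = \left(\begin{smallmatrix} 4 & 2 \\ 2 & y_i \end{smallmatrix}\right)$: a direct computation gives $\lambda_{\min}(B_{y_i}) < 4$ for every integer $y_i \geq 1$, so $\mu_{\min}(XX^\top) \leq \lambda_{\min}(B_{y_i}) < 4$, forcing at least one Galois conjugate of $\mu$ to lie strictly below $4$.

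The main remaining task, and the main obstacle, is verifying in each subcase of \Cref{sec:smalldegrees} that such a $B_{y_i}$-block does appear as a principal submatrix. In Case~1 and in Case~2 with $f<g-1$ at least one $B_{y_i}$-block survives by inspection of the block structure of $XX^\top$. In the extreme subcase $f=g-1$ of Case~2, all $2\times 2$ blocks degenerate to $1\times 1$ blocks $(y_i)$, but \Cref{sec:smalldegrees} observes that any positive integer may be chosen for these entries; picking one $y_i\in\{1,2,3\}$ then yields a $1\times 1$ principal submatrix whose sole eigenvalue is below $4$, and Cauchy interlacing still concludes. Case~3 ($1\leq d\leq g$) is already handled in \cite{LL}. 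A final appeal to Hilbert's irreducibility theorem produces infinitely many integer specialisations of $y$ for which $\chi_{XX^\top}$ is irreducible and, simultaneously, the top-left entry exceeds $4$; for each such specialisation both bounds hold and \Cref{thm:criterion} delivers that $T_\alpha\circ T_\beta$ is pseudo-Anosov with stretch factor of degree $2d$.
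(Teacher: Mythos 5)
Your proposal is correct, and it verifies the hypothesis of \Cref{thm:criterion} by a genuinely different route than the paper. Your key move is the identity~$\sigma(\Omega+2I)+\mathrm{null}(\Omega+2I)=\dim(\Omega)-2\,\#\{\mu_i>4\}$, which converts the criterion into the clean spectral statement that some, but not all, eigenvalues of the~$d\times d$ matrix~$XX^\top$ exceed~$4$; the parallel copies of~$\beta$-components then only enlarge the kernel of~$\Omega$ and drop out of the argument entirely. You settle the two sides with the Perron eigenvalue (bounded below by the large diagonal entry) and with Cauchy interlacing against a principal block~$\left(\begin{smallmatrix}4&2\\2&y_i\end{smallmatrix}\right)$, whose smaller eigenvalue is below~$4$ for every~$y_i\ge1$, or a~$1\times1$ block~$(y_i)$ with~$y_i\le3$ in the degenerate subcase~$d=g+1$ where all~$2\times2$ blocks have been destroyed. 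The paper instead works with~$\Omega+2I$ itself: it deletes the rows and columns of a few~$\alpha$-components to obtain a principal submatrix~$\Omega'$ that is block diagonal with genus-one pieces as in \Cref{genusone}, computes the signature and nullity of each block, and propagates this to~$\Omega+2I$ via the bound~$\sigma(\Omega+2I)\ge\sigma(\Omega'+2I)-(\dim\Omega-\dim\Omega')$ together with an explicit count of~$\dim\Omega-\dim\Omega'$; in the subcase~$g<d<2g$ it likewise has to normalise one block to have a small number of parallel copies ("we may assume~$y=3$"), exactly mirroring your choice of one~$y_i\le3$, so your proof covers the same class of examples as the paper's. What your approach buys is a shorter and more conceptual verification that isolates what the criterion of \Cref{thm:criterion} really asks of the conjugates of~$\mu$, at the cost of the (correct but implicit) checks that the blocks you interlace against really occur as principal submatrices in every construction of \Cref{sec:smalldegrees}, including after the closing-up step of \Cref{closedcase}; the paper's bookkeeping stays closer to the surface decomposition and to the original form of the criterion.
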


\noindent 
For the case~$1\le d \le g$, this is shown in~\cite{LL}. 

\begin{proof}[Proof of~\Cref{sf_degrees}]
According to~\Cref{thm:criterion}, all there is to show is
\begin{align}
\label{ineq}
\dim(\Omega) > \sigma(\Omega + 2I) + \mathrm{null}(\Omega + 2I) > \dim(\Omega) - 2d.
\end{align}

\noindent
We now make a case distinction depending on~$d$. 
\smallskip

\noindent
\emph{Case 1: $2g \le d \le 3g-3$}. 
We consider the submatrix~$\Omega'$ of~$\Omega$ that is obtained by deleting all 
the rows and columns corresponding to components of the multicurve~$\alpha$ that 
have been added during the inductive step or closing up of the surface. Furthermore, 
if~$d<3g-3$, we also remove the component of~$\alpha$ encircling multiple handles 
of the starting surface, that is, the component~$\alpha_1$ of the surface depicted in~\Cref{k_block}. 
\smallskip

\noindent
A base change by a permutation matrix brings~$\Omega' + 2I$ into block diagonal form 
with~$g-1$ blocks corresponding to genus one surface pieces as depicted in~\Cref{genusone}, 
and a block of the form~$2I$. For a block of the former type, and for~$y>4$, 
we directly calculate that the nullity is zero and the signature equals the dimension of the block 
minus two. Already, this implies that certainly the signature of~$\Omega+2I$ is not equal to its 
dimension, and it only remains to verify the lower bound in~\Cref{ineq}. 
\smallskip

\noindent
By construction, if the genus equals~$g \ge 2$, we have~$g-1$ surface pieces as in~\Cref{genusone}. 
This in particular implies that~$\sigma(\Omega') =\dim(\Omega') - 2g + 2$. Furthermore, 
we have~$\dim(\Omega) - \dim(\Omega') = d - 2g+2$. 
The latter equality follows from that fact that  the number of components of~$\alpha$ 
in our construction is exactly~$d$, 
and there are two components per surface pieces as in~\Cref{genusone}. 
We now calculate 
\begin{align*}
\sigma(\Omega+2I) &\ge \sigma(\Omega') - (\dim(\Omega) - \dim(\Omega')) \\
&= (\dim(\Omega') - 2g + 2) - (d - 2g+2) \\
&= \dim(\Omega') - d \\
&> \dim(\Omega) -2d,
\end{align*}
which implies~\Cref{ineq}, so we are done for this case.
\smallskip

\noindent
\emph{Case 2: $g<d < 2g$.}
We consider the submatrix~$\Omega'$ of~$\Omega$ that is obtained by deleting  
two rows and two columns corresponding to the following components of the multicurve~$\alpha$: 
the one corresponding to the component~$\alpha_1$ encircling multiple handles in~\Cref{k_block} 
and the one obtained from closing the surface. Recall that we have removed~$f = 2g-d$ 
of the separating curves~$\alpha_2, \dots, \alpha_{2g-2}$. 
\smallskip

\noindent
A base change by a permutation matrix brings~$\Omega' + 2I$ into block diagonal form 
with~$g-1- f$ blocks corresponding to surface pieces as in~\Cref{genusone},~$f$ 
blocks corresponding to surface pieces as in~\Cref{genusone} but with the separating 
component of~$\alpha$ removed, and a block of the form~$2I$. 
\smallskip

\noindent
For a block of the first type, and for~$y>4$, 
recall from the previous case that the nullity is zero and the signature equals the dimension of the block 
minus two. 
For a block of the second type, the sum of the nullity and the signature equals the dimension 
of the block if~$y \le 3$, and it equals the dimension of the block minus two if~$y>3$. 
We may assume that for at least one block of the second type, we have~$y = 3$. This 
is enough to ensure that~$\dim(\Omega) > \sigma(\Omega + 2I) + \mathrm{null}(\Omega + 2I)$, 
so again we only need to verify the lower bound in~\Cref{ineq}.
\smallskip

\noindent
By construction, if the genus equals~$g \ge 2$, we have~$g-1$ surface pieces as in~\Cref{genusone}. 
Having at least one piece with~$y\le 3$, this implies that~$\sigma(\Omega') > \dim(\Omega') - 2g + 2$. Furthermore, 
we have~$\dim(\Omega) - \dim(\Omega') = 2$. 
We now calculate 
\begin{align*}
\sigma(\Omega+2I) &\ge \sigma(\Omega') - (\dim(\Omega) - \dim(\Omega')) \\
&> (\dim(\Omega') - 2g + 2) - 2 \\
&= \dim(\Omega') -2g  \\
&= \dim(\Omega) -2g + 2 
\ge \dim(\Omega) - 2d,
\end{align*}
which implies~\Cref{ineq} also in the case~$g<d < 2g$, so we are done.
\end{proof}

\subsection{Explicit examples with stretch factor degree~$6g-6$}
\label{sec:completely_explicit}
We conclude this section by giving explicit computations, for~$1<g\le 201$, supporting a conjecture on the irreducibility 
of the characteristic polynomials constructed in~\Cref{realisation_section} for specific values of~$y$. 
In the inductive step of~\Cref{inductive_step}, one uses a map
\[
\phi_k : \begin{array}{cccc}
M_{k}(\mathbb Z)\times \mathbb Z&\longrightarrow &M_{k+3}(\mathbb Z) \\
(C,y)&\mapsto  &\left(
\begin{array}{c|c|c}
4y^2 & * & *  \\
\hline 
*  & C & \\
\hline
 * &  & A
\end{array}\right)
\end{array}, 
\qquad \mathrm{with} \ A=\left(
\begin{smallmatrix}
4 & 2   \\
2 & 12
\end{smallmatrix}
\right).
\]
For~$g>1$ we inductively construct the~$(3g-1) \times (3g-1)$ matrix~$M_g$ 
with the maps~$\phi_{3i-1}$ for~$i=1,\dots,g-1$:
\[
M_g = \phi_{3(g-1)-1}(\phi_{3(g-2)-1}(\dots \phi_{3\cdot 2 -1}(\phi_{3\cdot 1 -1}(B,y^{(1)}),y^{(2)}),\dots,y^{(g-2)}),y^{(g-1)}),
\]
with~$B=\left( \begin{smallmatrix}4 & 2   \\ 2 & 13 \end{smallmatrix}\right)$ and suitable 
parameters~$y^{(i)}$ given by Hilbert's irreducibility theorem. The condition~$y^2 > \frac1{4}c_{11} + 1$ 
appearing in the construction is obviously equivalent to~$(y^{(i)})^2 > (y^{(i-1)})^2 + 1$. 
Finally, following~\Cref{closedcase} the matrix~$XX^\top$ for the multicurves~$\alpha$ and~$\beta$ 
on the closed surface of genus~$g+1$ takes the form
\[
N_g =
\left(
\begin{array}{c|ccc}
y^2 & * \\
\hline 
* & M_g 
\end{array}
\right)
\]
with the condition~$y^2 > \frac1{4} (M_g)_{11} = (y^{(g-1)})^2$.
\smallskip

\noindent 
By computer {assistance~\cite{sage}}, one immediately checks the following proposition.

\begin{prop}
\label{prop:irr}
For any $1 < g \leq 200$, if $y^{(i)} = i+1$ for $i=1,\dots,g-1$, then the characteristic polynomial $\chi_{M_g}$ is irreducible over $\mathbb Q$. Moreover for 
$y = g+1$, $\chi_{N_g}$ is irreducible over $\mathbb Q$.
\end{prop}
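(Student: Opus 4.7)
The plan is a purely computational verification: for each $g$ in the finite range $2 \le g \le 200$, explicitly build the matrices $M_g$ and $N_g$, compute their characteristic polynomials, and check irreducibility over $\mathbb{Q}$ using standard routines from a computer algebra system such as SageMath, PARI or Magma. Since the statement is a finite Boolean check per value of $g$, no mathematical obstruction stands between us and a proof; the task is to implement $\phi_k$ faithfully and to manage the size of the resulting polynomials.

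First I would code the map $\phi_k$ by reading off the starred entries from the inductive step of~\Cref{inductive_step}. Given $C$ of size $k$ with top-left entry $c_{11}$, the new $4y^2$ diagonal entry couples to the first row of $C$ (via the curve $\alpha_1'$ encircling the handles of the previously constructed surface) and to the first row of the genus-one block $A$ (via the two intersections of $\beta_0$ with the distinguished generator of the new piece). Translating the picture into explicit integer entries is a careful bookkeeping exercise. The admissibility condition $y^2 > \tfrac14 c_{11} + 1$ must be checked at every step; for $y^{(i)}=i+1$ it reduces inductively to $(i+1)^2 > i^2 + 1$, and for the closing step $y = g+1$ to $(g+1)^2 > g^2$, both of which hold. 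Starting from $B = \bigl(\begin{smallmatrix} 4 & 2 \\ 2 & 13 \end{smallmatrix}\bigr)$, iterated application of $\phi_{3i-1}$ produces $M_g$ of size $3g-1$, and the closing step of~\Cref{closedcase} produces $N_g$ of size $3g$.

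Once $M_g$ and $N_g$ have been assembled, I would compute $\chi_{M_g}(t), \chi_{N_g}(t) \in \mathbb{Z}[t]$ and test each for irreducibility. By Gauss's lemma, irreducibility over $\mathbb{Q}$ is equivalent to irreducibility over $\mathbb{Z}$, and factorisation over $\mathbb{Z}$ is handled by standard algorithms (Hensel lifting from $\mathbb{F}_p[t]$ followed by van Hoeij's LLL-based combination step). For efficiency at the upper end of the range it is useful to compute the characteristic polynomial modularly for many primes and reconstruct via the Chinese remainder theorem, since the coefficients of $\chi_{M_g}$ grow quickly with $g$.

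The main obstacle is computational scale rather than mathematical subtlety. For $g = 200$ the matrices have dimension close to $600$ and the characteristic polynomial has degree $3g-1 = 599$ with substantial coefficient bit-length, so naive symbolic computation in a single CAS call can be slow. Careful use of modular methods for the characteristic polynomial and of a well-tuned factorisation routine keeps the whole range $g \le 200$ tractable on a contemporary machine, and delivers the two irreducibility statements of~\Cref{prop:irr} as claimed.
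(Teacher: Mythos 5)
Your proposal is exactly the paper's approach: the paper proves \Cref{prop:irr} simply by computer verification, constructing $M_g$ and $N_g$ for $2\le g\le 200$ with the stated parameters and checking irreducibility of $\chi_{M_g}$ and $\chi_{N_g}$ in a computer algebra system. Your additional remarks on modular computation of the characteristic polynomial and LLL-based factorisation are sensible implementation details but do not change the argument.
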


\noindent
Together with~\Cref{sf_degrees}, this gives explicit examples of pseudo-Anosov maps 
realising the upper bound~$6g-6$ in~\Cref{thm:main:1} for every~$1 < g \leq 201$. 
We do not know whether~$\chi_{M_g}$ and~$\chi_{N_g}$ are actually irreducible for 
every~$g>200$ with the parameters~$y^{(i)}=i+1$ chosen as in~\Cref{prop:irr}.

\bibliographystyle{alphaurl}
\bibliography{biblio}

\newcommand{\etalchar}[1]{$^{#1}$}
\begin{thebibliography}{HMTY08}

\bibitem[{And}87]{Ando}
Tsuyoshi {Ando}.
\newblock {Totally positive matrices}.
\newblock {\em {Linear Algebra Appl.}}, 90:165--219, 1987.
\newblock \href {https://doi.org/10.1016/0024-3795(87)90313-2}
  {\path{doi:10.1016/0024-3795(87)90313-2}}.

\bibitem[Bel21]{flipper}
Mark Bell.
\newblock flipper (computer software).
\newblock \url{https://pypi.org/project/flipper/}, 2013--2021.
\newblock Version 2 July 2021.

\bibitem[Bou22]{Boulanger}
Julien Boulanger.
\newblock Central points of the double heptagon translation surface are not
  connection points.
\newblock {\em Bull. Soc. Math. Fr.}, 150(2):459--472, 2022.
\newblock \href {https://doi.org/10.24033/bsmf.2851}
  {\path{doi:10.24033/bsmf.2851}}.

\bibitem[Gan59]{Gantmacher}
F.~R. Gantmacher.
\newblock {\em The Theory of Matrices, Vol 2}.
\newblock AMS Chelsea Publishing, Providence, RI, 1959.

\bibitem[GJ00]{Gutkin:Judge}
Eugene {Gutkin} and Chris {Judge}.
\newblock {Affine mappings of translation surfaces: Geometry and arithmetic}.
\newblock {\em {Duke Math. J.}}, 103(2):191--213, 2000.
\newblock \href {https://doi.org/10.1215/S0012-7094-00-10321-3}
  {\path{doi:10.1215/S0012-7094-00-10321-3}}.

\bibitem[Hal33]{Marshall}
Marshall Hall.
\newblock Quadratic residues in factorization.
\newblock {\em Bull. Am. Math. Soc.}, 39:758--763, 1933.
\newblock \href {https://doi.org/10.1090/S0002-9904-1933-05730-0}
  {\path{doi:10.1090/S0002-9904-1933-05730-0}}.

\bibitem[HK06]{HironakaKin}
Eriko Hironaka and Eiko Kin.
\newblock {A family of pseudo-Anosov braids with small dilatation}.
\newblock {\em Algebraic \& Geometric Topology}, 6(2):699 -- 738, 2006.
\newblock \href {https://doi.org/10.2140/agt.2006.6.699}
  {\path{doi:10.2140/agt.2006.6.699}}.

\bibitem[HMTY08]{Hanson2008GeneralizedCF}
E.~H. Hanson, Adam~Benjamin Merberg, Christopher Towse, and Elena Yudovina.
\newblock Generalized continued fractions and orbits under the action of hecke
  triangle groups.
\newblock {\em Acta Arithmetica}, 134:337--348, 2008.
\newblock URL: \url{https://api.semanticscholar.org/CorpusID:58923865}.

\bibitem[KS00]{Kenyon:Smillie}
Richard {Kenyon} and John {Smillie}.
\newblock {Billiards on rational-angled triangles}.
\newblock {\em {Comment. Math. Helv.}}, 75(1):65--108, 2000.
\newblock \href {https://doi.org/10.1007/s000140050113}
  {\path{doi:10.1007/s000140050113}}.

\bibitem[LL24]{LL}
Erwan Lanneau and Livio Liechti.
\newblock Trace field degrees of abelian differentials.
\newblock {\em Comment. Math. Helv.}, 99(1):81--110, 2024.
\newblock \href {https://doi.org/10.4171/CMH/565} {\path{doi:10.4171/CMH/565}}.

\bibitem[Mar19]{Margalit:question}
D.~Margalit.
\newblock {Problems, questions, and conjectures about mapping class groups}.
\newblock In {\em {Breadth in contemporary topology}}, volume {102} of {\em
  {Proc. Sympos. Pure Math.}}, pages {157--186}. {Amer. Math. Soc., Providence,
  RI}, {2019}.

\bibitem[Mur08]{Ram}
M.~Ram Murty.
\newblock Polynomials assuming square values.
\newblock In {\em Number theory and discrete geometry. Proceedings of the
  international conference in honour of Professor R. P. Bambah, Chandigarh,
  India, November 30--December 3, 2005}, pages 155--163. Mysore: Ramanujan
  Mathematical Society, 2008.

\bibitem[{Str}17]{Strenner:algebraic}
Bal\'azs {Strenner}.
\newblock {Algebraic degrees of pseudo-Anosov stretch factors}.
\newblock {\em {Geom. Funct. Anal.}}, 27(6):1497--1539, 2017.
\newblock \href {https://doi.org/10.1007/s00039-017-0429-4}
  {\path{doi:10.1007/s00039-017-0429-4}}.

\bibitem[S{\etalchar{+}}YY]{sage}
W.\thinspace{}A. Stein et~al.
\newblock {\em {S}age {M}athematics {S}oftware ({V}ersion x.y.z)}.
\newblock The Sage Development Team, YYYY.
\newblock {\tt http://www.sagemath.org}.

\bibitem[{Thu}88]{Th}
William~P. {Thurston}.
\newblock {On the geometry and dynamics of diffeomorphisms of surfaces}.
\newblock {\em {Bull. Am. Math. Soc., New Ser.}}, 19(2):417--431, 1988.
\newblock \href {https://doi.org/10.1090/S0273-0979-1988-15685-6}
  {\path{doi:10.1090/S0273-0979-1988-15685-6}}.

\bibitem[{Vee}82]{Veech}
William~A. {Veech}.
\newblock {Gauss measures for transformations on the space of interval exchange
  maps}.
\newblock {\em {Ann. Math. (2)}}, 115:201--242, 1982.
\newblock \href {https://doi.org/10.2307/1971391} {\path{doi:10.2307/1971391}}.

\bibitem[{Vee}89]{Veech:construction}
William~A. {Veech}.
\newblock {Teichm\"uller curves in moduli space, Eisenstein series and an
  application to triangular billiards}.
\newblock {\em {Invent. Math.}}, 97(3):553--583, 1989.
\newblock \href {https://doi.org/10.1007/BF01388890}
  {\path{doi:10.1007/BF01388890}}.

\end{thebibliography}

\end{document}